\crefname{hypothesis}{Hypothesis}{Hypotheses}
\crefname{fact}{Fact}{Facts}
\title{Inexact Uzawa-Double Deep Ritz Method for Weak Adversarial Neural Networks}
\author{Emin Benny-Chacko\thanks{School of Mathematical Sciences, University of Nottingham, United Kingdom 
(\email{Emin.BennyChacko1@nottingham.ac.uk}, \email{Ignacio.Brevis1@nottingham.ac.uk}, \email{Luis.Espath@nottingham.ac.uk}, \email{KG.vanderZee@nottingham.ac.uk}).}
\and Ignacio Brevis \footnotemark[1]
\and Luis Espath\footnotemark[1]
\and  Kristoffer G. van der Zee\footnotemark[1] }
\newcommand{\circled}[1]{%
  \tikz[baseline=(char.base)]{
    \node[shape=circle, draw=black, text=black, inner sep=1pt, line width=0.4pt] (char) {#1};
  }%
}
\begin{document}

\maketitle

\begin{abstract}
Residual minimization in dual norms is central to Weak Adversarial Neural Network (WAN) approaches for solving partial differential equations (PDEs). This framework naturally leads to saddle-point problems whose numerical solutions can be highly unstable depending on the underlying iterative scheme. Motivated by this structure, we propose and analyze the Uzawa Double Deep Ritz Method, a deep PDE solver that integrates neural network approximations with the classical Uzawa iteration. The proposed method is built around two coupled update rules performed at each iteration: a residual update, obtained by minimizing a Ritz functional associated with the dual problem, and a solution update, obtained by minimizing a Ritz functional driven by the current residual. Both variables are represented by neural networks, mirroring the classical Uzawa architecture for saddle-point problems. By replacing the adversarial min-max optimization of WAN with a sequence of Deep Ritz minimization problems, our study theoretically proves that the proposed method acts as an iterative scheme for solving the WAN formulation. Furthermore, we establish a comprehensive convergence theory for an inexact Uzawa scheme where both subproblems are solved approximately. This analysis extends to practical gradient-based implementations, providing rigorous stability and convergence guarantees for both single and multiple-gradient step update strategies. Numerical experiments validate our theoretical findings and demonstrate the robustness of the proposed approach.
\end{abstract}

\begin{keywords}
Residual Minimization, Dual Norm, Weak Adversarial Neural Networks, Saddle Point Problems, Inexact Uzawa Method, Deep Ritz method.
\end{keywords}

\begin{MSCcodes}
68T07, 65N12, 65K10
\end{MSCcodes}
\section{Introduction}
Partial differential equations (PDEs) provide a fundamental mathematical framework for modeling a wide range of phenomena in science and engineering, including diffusion, elasticity, fluid flow, and electromagnetics. Since analytical solutions are rarely available for realistic applications, the construction of accurate, stable, and efficient numerical approximation methods remains a central objective of scientific computing. Among the many available approaches, minimal residual (MinRes) formulations occupy a distinguished position as they measure approximation quality directly through the residual of the governing equation \cite{doi:10.1137/1.9781611977738,johnson2009numerical,larson2013finite,benzi2005numerical}. Consider the variational problem: Find \(u \in U\) such that
\begin{equation*}
b(u,v)=\ell(v), \qquad \forall\, v\in V,
\end{equation*}
where \(U\) and \(V\) are real Hilbert trial and test spaces, respectively, \(b:U\times V\to\mathbb{R}\) is a bilinear form, and \(\ell:V\to\mathbb{R}\) is a continuous linear functional. The associated MinRes problem seeks
\begin{equation*}
u^*=\arg\min_{u\in U}\|\ell-Bu\|_{V^*}
=\arg\min_{u\in U}\sup_{v\in V}
\frac{\ell(v)-b(u,v)}{\|v\|_V},
\end{equation*}
where \(B:U\to V^*\) is the operator induced by \(b\) and $V^*$ denotes the dual of $V$. Thus, the objective is to identify the trial function \(u\) whose residual \(\|\ell-Bu\|_{V^*}\) is smallest in the dual norm of the test space. This principle underlies several robust numerical methods and is closely connected to quasi-optimality \cite{doi:10.1137/1.9781611977738,brevis2022neural}, reliable error control \cite{brevis2021machine}, and mixed variational formulations \cite{doi:10.1137/1.9781611977738}. Furthermore, when variational frameworks are solved via residual minimization with neural network, this framework has been extended to establish rigorous error estimates for numerical quasi-minimizers \cite{shin2023error}.

The emergence of deep learning~\cite{aggarwal2018neural,higham2019deep,bengio2017deep} has stimulated a new generation of PDE solvers in which classical approximation spaces are replaced by trainable neural networks \cite{huang2025partial}. Within this setting, residual minimization remains especially attractive because it preserves the variational structure of the original problem while enabling mesh-free approximations. Early neural network approaches have already explored residual-based training for differential equations \cite{dissanayake1994neural,lagaris1998artificial}. More recently, Physics-Informed Neural Networks (PINNs) enforced strong-form residuals directly in the loss function \cite{raissi2019physics,sirignano2018dgm}, while variational methods such as VPINNs, Robust VPINNs, and Deep Ritz formulations employed weak residuals or energy principles \cite{kharazmi2021hp,rojas2024robust,yu2018deep}. These developments highlight the growing importance of combining expressive neural approximations with mathematically meaningful optimization objectives.

A particularly relevant framework from the viewpoint of dual-norm residual minimization is the Weak Adversarial Network (WAN) method \cite{zang2020weak}. In WAN, one neural network approximates the trial solution, while a second network acts as an adaptive test function that seeks the most critical residual direction. This leads naturally to the saddle-point problem
\begin{equation*}
\min_{u_\theta\in U_\theta}\sup_{v_\eta\in V_\eta}
\frac{\langle \ell-Bu_\theta,v_\eta\rangle_{V^*,V}}{\|v_\eta\|_V}.
\end{equation*}
 where \( u_{\theta} \in U_{\theta} \subset U \) and \( v_{\eta} \in V_{\eta} \subset V \) denote neural 
networks, while \(U_{\theta}\) and \(V_{\eta}\) 
are the corresponding neural network sets, and \(\theta\) and \(\eta\) are the trainable 
parameters.

Although conceptually appealing, WANs involve numerical instability during the min-max optimization. Specifically, as the resdiual approaches zero, identifying the supremizer becomes ill-posed, which was mitigated in \cite{bertoluzza2024wan} and \cite{uriarte2023deep}. \cite{bertoluzza2024wan} provides a theoretical analysis of solution stability and approximation bounds while proposing stabilized WAN variants that avoid unstable direct normalization and improve boundary condition enforcement. The Double Deep Ritz Method introduced in \cite{uriarte2023deep}, replaces the adversarial structure with two coupled Deep Ritz energy minimization for the primal and test networks; however, this method requires nested inner minimization loops, making each iteration computationally demanding and can be sensitive to the accuracy of inner solves. This motivates the development of more robust solution strategies grounded in classical numerical analysis.

The motivation for the present work is rooted in the structural equivalence among several formulations of residual minimization, as schematically summarized in the diagram of relationships in \cref{fig:equivalence}. In the ideal continuous setting (the top row of \cref{fig:equivalence}), the saddle-point problem associated with MinRes is equivalent to both a mixed variational formulation and a constrained Ritz minimization problem through their common optimality conditions. Consequently, classical iterative solvers such as the Uzawa method \cite{uzawa1958iterative,bacuta2006unified,bramble1997analysis,badea2025convergence} may be viewed not only as saddle-point solvers, but also as iterative procedures that converge to the exact MinRes solution as the iteration count $k \to \infty$, as illustrated by relation \circled{1} in \cref{fig:equivalence}.

When moving from continuous spaces to neural network sets (the transition from the top row to the bottom row via the approximation limits $V_\eta \to V$ in relations \circled{2} and \circled{4}), this underlying variational structure is preserved. This yields two methods: the Weak Adversarial Network (WAN) objective (bottom left) and the Uzawa Double Deep Ritz method (bottom right). Crucially, establishing relation \circled{3} between a fully discretized neural network setting and the WAN formulation is fundamental, as the Uzawa Double Deep Ritz method solves the WAN problem. Hence, the Uzawa Double Deep Ritz method serves as an iterative realization of the WAN formulation. Related ideas have recently been extended to partially discretized neural settings, where equivalence results were established for residual-Uzawa formulations \cite{alsobhi2025neural}. By mapping out these connections, \cref{fig:equivalence} highlights how our proposed framework replaces adversarial min-max training with a robust, sequential residual-correction and solution-update procedure grounded in classical saddle-point theory.

\begin{figure}[htbp]
    \centering
\scalebox{0.8}{    
\begin{tikzpicture}[
>=Latex,
box/.style={draw, rounded corners=4pt, very thick, align=center, minimum width=5cm, minimum height=2.8cm, inner sep=8pt},
arr/.style={->, thick}
]

\node[box] (wan) at (0,0) {\textbf{WAN}\\[2pt]
$u_\theta = \arg\min\limits_{u_\theta\in U_\theta}\sup\limits_{v_\eta\in V_\eta}
\dfrac{\langle \ell-Bu_\theta,v_\eta\rangle_{V^*,V}}{\|v_\eta\|_V}$};

\node[box] (minres) at (0,4) {\textbf{MinRes}\\[2pt]
$u_h = \arg\min\limits_{u_h\in U_h}\sup\limits_{v\in V}
\dfrac{\langle \ell-Bu_h,v\rangle_{V^*,V}}{\|v\|_V}$};

\node[box] (deep) at (8.6,0) {\textbf{Uzawa Double Deep Ritz}\\[2pt]
$\dfrac12\|r_\eta^k\|^2_V-\ell(r_\eta^k)+b(u_\theta^k,r_\eta^k)\to\min\limits_{r^k\in V_\eta}$\\[2pt]
$\dfrac12\|u_\theta^{k+1}\|^2_U-(u_\theta^{k+1},u_\theta^k)_U-\tau b(u_\theta^{k+1},r_\eta^k)\to\min\limits_{u_\theta^{k+1} \in U_\theta}$};

\node[box] (uzawa) at (8.6,4) {\textbf{Uzawa}\\[2pt]
$\dfrac12\|r^k\|^2_V-\ell(r^k)+b(u_h^k,r^k)\to\min\limits_{r^k\in V}$\\[2pt]
$(u_h^{k+1}-u_h^k,w_h)_U-\tau b(w_h,r^k)=0~\forall w_h \in U_h$};

\draw[arr] (wan) -- node[left,pos=.5] {$V_\eta\to V$} (minres);
\draw[arr] (wan) -- node[right,pos=.5] {\circled{4}} (minres);
\draw[arr] (deep) -- node[left,pos=.5] {$V_\eta\to V$} (uzawa);
\draw[arr] (deep) -- node[right,pos=.5] {\circled{2}} (uzawa);
\draw[arr] (deep) -- node[above,pos=.5] {$k\to\infty$ } (wan);
\draw[arr] (deep) -- node[below,pos=.5] {\circled{3}} (wan);
\draw[arr] (uzawa.west) to[out=180,in=0] node[above,pos=.5] {$k\to\infty$} (minres.east);
\draw[arr] (uzawa.west) to[out=180,in=0] node[below,pos=.5] {\circled{1}} (minres.east);

\end{tikzpicture}
}
    
    \caption{Framework illustrating the relationships among WAN, MinRes, Uzawa, and Uzawa Double Deep Ritz approaches.}
    \label{fig:equivalence}
\end{figure}

In this work, we propose and analyze the \emph{Uzawa Double Deep Ritz Method}, a fully neural network variational solver that combines Deep Ritz approximations with the classical Uzawa iteration. Through the theoretical analysis, we bridge the relation \circled{3}, formally proving that the proposed method is an iterative scheme for solving WAN objective in the neural network setting. At each outer iteration, the method performs two coupled updates: a residual update obtained by minimizing a Ritz functional associated with the dual problem, followed by a solution update obtained by minimizing a Ritz functional driven by the current residual. Both the solution and residual variables are represented by neural networks. The resulting framework preserves the mesh-free flexibility of neural approximation while replacing unstable adversarial dynamics with sequential energy-based optimization steps. It may be viewed as a fully neural counterpart of the recent residual-Uzawa approach \cite{alsobhi2025neural} and is closely related in spirit to Ritz-Uzawa Neural Network formulations \cite{herrera2026runns,makridakis2024deep}.

The main contributions of this paper are as follows:
\begin{itemize}
    \item We establish that the Uzawa Double Deep Ritz method can be interpreted as an iterative realization of the WAN formulation (\cref{thm:uzawa_wan}), thereby connecting adversarial residual minimization with classical saddle-point iterations (relation \circled{3} in \cref{fig:equivalence}).

    \item We prove convergence of an inexact Uzawa scheme in which both update steps are performed only approximately (\cref{thm:inexact}), provided the approximation errors at each step are suitably controlled.

    \item We establish convergence results when the inexact updates are computed by gradient-based optimization (\cref{thm:uddr} and \cref{thm:nmconvg}). In particular, we analyze both a single-gradient-step regime, which is structurally equivalent to the Arrow-Hurwicz method \cite{benzi2005numerical}, and a multi-step regime in which several gradient iterations are performed within each update block.
\end{itemize}
In the remainder of this paper, \Cref{sec:methodology} presents the methodology of the proposed framework. It begins with the abstract variational setting and the dual-residual minimization principle, then derives the equivalent saddle-point formulation and recalls the classical Uzawa iteration together with its convergence properties. The section also explains how the weak adversarial neural network formulation arises from restricting the trial and test spaces to neural network sets. \Cref{sec:uzawa_drr} introduces the Uzawa Double Deep Ritz method, where both the residual and solution variables are represented by neural networks, and establishes its interpretation as an iterative realization of the weak adversarial network framework. \Cref{sec:Theoretical_Results} develops the theoretical analysis of the method by studying an inexact Uzawa scheme in which both updates are computed approximately. Convergence results are proved for controlled inexactness, for single gradient-step updates, and for multiple inner gradient iterations. \Cref{sec:Training-alg} describes the neural network approximation and practical training algorithms, including both block-gradient and full-gradient strategies for solving the two Deep Ritz minimization problems. Finally, \Cref{sec:num_exp} reports numerical experiments for representative one-dimensional and two-dimensional PDE problems, demonstrating the convergence and effectiveness of the proposed solver.
\section{Methodology}
\label{sec:methodology}
In this section, we first describe the abstract problem setting and recall the dual-residual minimization framework that motivates our approach. We then express the problem in its equivalent saddle-point form and review the Uzawa iteration at the continuous level, together with its standard convergence property, following the ideas in \cite{makridakis2024deep}. The resulting framework provides the foundation for the neural network Uzawa scheme introduced in \cref{sec:uzawa_drr}.

\subsection{Abstract Setting}
We consider the following abstract variational formulation:
\begin{equation}\label{variational_formulation}
    \text{Find } u \in U \text{ such that:   } b(u, v) = \ell(v), \quad \forall\, v \in V.
\end{equation}
where \( U \) and \( V \) are real Hilbert trial and test spaces respectively, $b:U \times V \rightarrow \mathbb{R}$ is a bilinear form, and $\ell:V \rightarrow\mathbb{R}$ is a continuous linear functional. Equivalently in operator form:
\begin{equation}\label{operator_form}
    \text{Find } u \in U \text{ such that: } Bu = \ell,
\end{equation}
where $B: U \rightarrow V^*$ is the operator defined by \[  \langle Bu, v \rangle_{V^*,V} = b(u, v) \quad \text{for all } u \in U,\, v \in V.
\]and $\ell \in V^*$, where $V^*$ denotes the dual of $V$.
Also, $B$ is continuous and bounded, i.e.,
\begin{equation}\label{bound_B}
    m\|u\|_U \leq \|Bu\|_{V^*}\leq M\|u\|_U, \quad u\in U
\end{equation}
for some positive constants $m\leq M$.
\subsection{Residual Minimization and Saddle point problem}
The equivalent Minimal Residual formulation for $\cref{variational_formulation}$ is,
\begin{equation}\label{Min_Res}
u^*= arg\min_{u\in U}\|\ell-Bu\|_{V^*}=\arg\min_{u\in U}\max_{v\in V}\frac{\ell(v)-b(u,v)}{\|v\|_V}.  
\end{equation}
The associated least-squares functional is defined as:
\begin{equation}
    J(u)=\frac{1}{2}\|\ell-Bu\|_{V^*}^2 \quad \forall\, u\in U.
\end{equation}
The minimal residual approximation is then obtained by solving:
\begin{equation}
    u^*= \arg\min_{u\in U}J(u).
\end{equation}
That is, we need to find the $u\in U$ that minimizes the loss function \(J(u)\), which ensures that $u$ satisfies the variational problem $\cref{variational_formulation}$.

\noindent This minimization problem is equivalent to a mixed (saddle-point) formulation as explained in \cite{doi:10.1137/1.9781611977738}, i.e.,
\begin{equation}\label{mixed problem_cont}
 \begin{cases}
 \text{Find  }r \in V, \ u \in U, \text{ such that:} \\
(r, v)_V + b(u, v) = \ell(v), \quad v \in V, \\
b(w, r) = 0, \quad  w \in U.
\end{cases}   
\end{equation}
\noindent From $\cref{Min_Res}$ and $ \cref{mixed problem_cont}$, the minimal residual problem is explicitly seen to be a min–max (saddle point) problem. Consequently, iterative methods developed for saddle point systems, such as Uzawa algorithms, become natural candidates for solving them. This observation motivates the use of Uzawa approaches in the context of minimal residual formulations.
\subsection{Uzawa Method}
To solve the saddle-point system \cref{mixed problem_cont} iteratively, we use the Uzawa algorithm where residual $r$ and solution $u$ are updated iteratively. 

\noindent We can reformulate \cref{mixed problem_cont} for $\tau>0$ as
\begin{equation}\label{uzawa}
 \begin{cases}
\text{Find  }r^k \in V, \ u^{k+1} \in U, \text{ such that:}\\
(r^k, v)_V + b(u^k, v) = \ell(v), \quad v \in V, \\
(u^{k+1},w)_U=\tau~b(w, r^k)+(u^k,w)_U , \quad  w \in U.
\end{cases}   
\end{equation}
The Uzawa Iterative Algorithm is as follows:
\begin{algorithm}[htbp]
\caption{Uzawa Algorithm}
\label{alg:uzawa}\label{uzawa_iterate}
\begin{algorithmic}[1]
\REQUIRE initial guess $u^{0}\in U$; step size $\tau\in(0,2/M^{2})$; maximum iterations $N$
\FOR{$k = 0,1,2,\dots,N-1$}
  \STATE Solve $r^{k}\!\in\!V$ such that:
  \begin{equation}\label{r_update}
     (r^{k},v)_V=\ell(v)-b(u^{k},v) \quad \forall\, v\in V  
  \end{equation}
  \STATE Update $u^{k+1}\!\in\!U:$
  \begin{equation}\label{u_update}
      (u^{k+1},w)_U=(u^{k},w)_U+\tau~b(w,r^{k}) \quad \forall\, w\in U 
    \end{equation} 
\ENDFOR
\RETURN $u^{N},r^{N-1}$
\end{algorithmic}
\end{algorithm}

\noindent We can also reformulate the Uzawa iterations \cref{r_update}, \cref{u_update} in terms of operators as:
\begin{equation}\label{operator_iterate}
 \begin{cases}
\langle R_V r^k,v\rangle_{V^*,V} + \langle Bu^k,v \rangle_{V^*,V} = \langle \ell,v \rangle_{V^*,V}\\
(u^{k+1},w)_U=(u^{k},w)_U+\tau~(R_U ^{-1}B^* r^{k},w)_U,
\end{cases} 
\end{equation}
which can also be written as 
\begin{equation}\label{uzawa_operator_update}
\begin{cases}
R_V r^k =  \ell -  Bu^k\\
u^{k+1}=u^{k}+\tau~R_U ^{-1}B^* r^{k},
\end{cases} 
\end{equation}
where $R_V \colon V \rightarrow V^{*}$ is Riesz operator on $V$, $R_U^{-1} \colon U^{*}\rightarrow U$ is the inverse Riesz operator on $U$ and  \( B^{*} : V \to U^* \) be the dual operator of \( B \), defined by
\[
  \langle B^* v, u \rangle_{U^*,U} =  \langle v, Bu \rangle_{V,V^*} =  \langle Bu, v \rangle_{V^*,V} = b(u, v) \quad \text{for all } u \in U,\, v \in V.
\]
The classical convergence result for Uzawa \cref{alg:uzawa} followed from \cite{makridakis2024deep} is as follows:
\begin{theorem}[Classical Result: Convergence of Exact Uzawa Method] \label{thm3.1}\\
Let \(u^k \in U \text{ and } r^k \in V \text{ for }  k= 0,1,2,\cdots\) be generated through the Uzawa iterative method \cref{uzawa_iterate}. Suppose \(u^* \in U \text{ and } r^* \in V\)  be the saddle point of the saddle point problem \cref{mixed problem_cont}. Then for \( \tau \in (0, \frac{2}{M^2})\) we have that 
    \(u^k \rightarrow u^* \in U, r^k\rightarrow r^*\in V.\)
\end{theorem}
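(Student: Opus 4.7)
The plan is to reduce the coupled Uzawa iteration to a single linear fixed-point recursion on the error in $u$, and then to show that the corresponding error-propagation operator has spectral radius strictly less than one for $\tau\in(0,2/M^{2})$.

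First, I would identify the saddle point. Since $u^{\ast}$ satisfies $Bu^{\ast}=\ell$, the second equation in \eqref{mixed problem_cont} together with $(r^{\ast},v)=\ell(v)-b(u^{\ast},v)=0$ for all $v\in V$ forces $r^{\ast}=0$. Thus it suffices to prove $u^{k}\to u^{\ast}$; the convergence $r^{k}\to 0$ will then follow from the first Uzawa step. Define the error $e^{k}=u^{k}-u^{\ast}$. From \eqref{uzawa_operator_update}, $R_{V}r^{k}=\ell-Bu^{k}=-Be^{k}$, and substituting into the $u$-update yields
\begin{equation*}
e^{k+1}=e^{k}-\tau R_{U}^{-1}B^{\dagger}R_{V}^{-1}Be^{k}=(I-\tau T)\,e^{k},
\qquad T:=R_{U}^{-1}B^{\dagger}R_{V}^{-1}B:U\to U.
\end{equation*}

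Second, I would establish the spectral properties of $T$. Using the definition of $B^{\dagger}$ and the fact that $\langle f,R_{V}^{-1}g\rangle_{V^{\ast},V}=(R_{V}^{-1}f,R_{V}^{-1}g)_{V}$ for $f,g\in V^{\ast}$, a direct calculation gives
\begin{equation*}
(Te,w)_{U}=\langle B^{\dagger}R_{V}^{-1}Be,w\rangle_{U^{\ast},U}=\langle Bw,R_{V}^{-1}Be\rangle_{V^{\ast},V}=(R_{V}^{-1}Bw,R_{V}^{-1}Be)_{V},
\end{equation*}
which is symmetric in $(e,w)$, so $T$ is self-adjoint on $U$. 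Moreover, by the Riesz isometry $\|R_{V}^{-1}f\|_{V}=\|f\|_{V^{\ast}}$, we get $(Te,e)_{U}=\|Be\|_{V^{\ast}}^{2}$, and the continuity/coercivity bounds \eqref{bound_B} yield
\begin{equation*}
m^{2}\|e\|_{U}^{2}\le (Te,e)_{U}\le M^{2}\|e\|_{U}^{2},
\end{equation*}
so that the spectrum of $T$ is contained in $[m^{2},M^{2}]$.

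Third, I would conclude via functional calculus. Since $T$ is self-adjoint with $\sigma(T)\subset[m^{2},M^{2}]$, the operator $I-\tau T$ is self-adjoint with operator norm
\begin{equation*}
\|I-\tau T\|_{U\to U}=\max_{\lambda\in\sigma(T)}|1-\tau\lambda|\le\max\{|1-\tau m^{2}|,\,|1-\tau M^{2}|\}.
\end{equation*}
For $\tau\in(0,2/M^{2})$ both quantities are strictly less than one, so $\|e^{k+1}\|_{U}\le q\|e^{k}\|_{U}$ with $q<1$, giving geometric convergence $u^{k}\to u^{\ast}$. Finally, $\|r^{k}\|_{V}=\|R_{V}^{-1}Be^{k}\|_{V}=\|Be^{k}\|_{V^{\ast}}\le M\|e^{k}\|_{U}\to 0$, proving $r^{k}\to r^{\ast}=0$.

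The main technical point is the step where $T$ is shown to be self-adjoint with $(Te,e)_{U}=\|Be\|_{V^{\ast}}^{2}$; this requires a careful juggling of the Riesz maps $R_{U},R_{V}$ and the adjoint $B^{\dagger}$, but once this identity is in place, the remainder is a standard application of the spectral theorem, and the constraint $\tau<2/M^{2}$ appears naturally from enforcing $|1-\tau\lambda|<1$ on the whole spectrum.
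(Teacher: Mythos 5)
Your proof is correct, but it takes a genuinely different route from the paper's. The paper runs an energy/monotonicity argument: it expands $\|u^{k+1}-u^\ast\|_U^2$ using the $u$-update, identifies the cross term $(R_U^{-1}B^\dagger(r^k-r^\ast),\,u^k-u^\ast)_U = -\|r^k-r^\ast\|_V^2$ from the $r$-update (their ``Claim (a)''), and arrives at the one-step decrease $\|u^{k+1}-u^\ast\|_U^2 \le \|u^k-u^\ast\|_U^2 + (\tau^2 M^2 - 2\tau)\|r^k-r^\ast\|_V^2$. It then telescopes: since $\|u^k-u^\ast\|_U^2$ is decreasing and bounded below, the consecutive differences vanish, forcing $\|r^k-r^\ast\|_V\to 0$. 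You instead eliminate $r^k$ entirely, collapse the iteration to the linear recursion $e_u^{k+1}=(I-\tau T)e_u^k$ with $T=R_U^{-1}B^\dagger R_V^{-1}B$, prove $T$ is self-adjoint with $(Te,e)_U=\|Be\|_{V^\ast}^2$ so that $\sigma(T)\subset[m^2,M^2]$, and conclude $\|I-\tau T\|<1$ by the spectral theorem. Your approach buys an explicit geometric rate $q=\max\{|1-\tau m^2|,|1-\tau M^2|\}<1$ and makes the convergence $u^k\to u^\ast$ immediate; note that the paper's argument, as written, only directly establishes $r^k\to r^\ast$ and leaves $u^k\to u^\ast$ implicit (it follows from $m\|u^k-u^\ast\|_U\le\|B(u^k-u^\ast)\|_{V^\ast}=\|r^k-r^\ast\|_V$, but this final step is not spelled out). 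The paper's energy argument, conversely, does not require the lower inf-sup bound $m>0$ to obtain $r^k\to r^\ast$, whereas your spectral argument relies on it to keep $|1-\tau m^2|<1$. Interestingly, your contraction-operator strategy is exactly the one the paper deploys later for Theorem~\ref{thm:uddr} via the block matrix $\mathbb{A}$, so your proof is stylistically closer to the inexact-Uzawa analysis than to the paper's own proof of this classical result. Your observation that $r^\ast=0$ (since $Bu^\ast=\ell$) is correct and simplifies the bookkeeping, though it is not essential: the same recursion $e_u^{k+1}=(I-\tau T)e_u^k$ holds for general $r^\ast$ once one subtracts the saddle-point equations.
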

\begin{proof}
    See Appendix \ref{sec:appendix}
\end{proof}
\subsection{Residual Minimization and Weak Adversarial Neural Network formulation}
To obtain a computable scheme, we restrict the infinite-dimensional spaces $U$ and $V$ in \cref{Min_Res} to neural network sets $U_\theta \subset U$ and $V_\eta \subset V$ parametrized by neural networks. This leads to the min-max problem
\begin{equation}\label{min-max_wan}
\min_{u_\theta \in U_\theta}
\sup_{v_\eta \in V_\eta}
\left(
\frac{\langle \ell - Bu_\theta , v_\eta \rangle_{V^*,V}}{\|v_\eta\|_V}
\right).  
\end{equation}
which corresponds to the Weak Adversarial Network (WAN) formulation.\\
\noindent In particular, the adversarial maximization in $v_\eta$ 
corresponds to the residual variable $r$ in the Uzawa framework, 
while the minimization in $u_\theta$ represents the solution update $u$. This structural correspondence suggests interpreting WAN training as an inexact Uzawa iteration(detailed in \Cref{sec:Theoretical_Results}), while both the update variables are restricted to neural network sets and subproblems are solved approximately. Such an interpretation provides a way in which convergence and stability properties of neural adversarial solvers may be studied.

Motivated by this observation, we propose a neural network Uzawa framework 
in which both the trial variable $u$ and the residual variable $r$ 
are represented by two interacting neural networks.
\section{Approximation with Neural Networks: Uzawa Double Deep Ritz Method}\label{sec:uzawa_drr}

In this section, we propose the Uzawa Double Deep Ritz algorithm 
(\cref{alg:uzawa_DRR}) to construct a Deep PDE Solver. The key idea is to embed trainable neural network approximations 
into both the residual and solution update steps of Uzawa. Also, we show how the Uzawa Double Deep Ritz Method is an iterative realization of the WAN formulation.
\subsection{Uzawa Double Deep Ritz Method}
The Uzawa iterative scheme \cref{uzawa_iterate} consists of two update rules, one for the residual \(r\) and one for the solution \(u\). Each of these updates can be interpreted as a Ritz minimization problem.

We first consider the update of \(r\) given in \cref{r_update}. For a fixed \(u^{k} \in U\), this update is equivalent to solving the Ritz minimization
\begin{equation}\label{ritz_r}
    r^{k} = \arg\min_{r \in V} \, \mathcal{L}_{u^{k}}(r),
    \qquad
    \mathcal{L}_{u^{k}}(r)
    = \frac{1}{2}\|r\|_{V}^{2} - \ell(r) + b(u^{k}, r).
\end{equation}
Thus, the variable \(r^{k}\) is obtained as the unique minimizer of the quadratic functional \(\mathcal{L}_{u^{k}}\).

Next, we examine the update of \(u\) given in \cref{u_update}. For fixed \(r^{k} \in V\) and the previous iterate \(u^{k}\), this step is also characterized by a Ritz minimization problem:
\begin{equation}\label{ritz_u}
    u^{k+1} = \arg\min_{u \in U} \, \mathcal{L}_{r^{k}}(u),
    \qquad
    \mathcal{L}_{r^k}(u)=\frac{1}{2} \| u \|_U^2 - \tau b(u, r^{k}) - (u^{k}, u)_U.
\end{equation}

Hence, each iteration of the Uzawa algorithm can be viewed as performing two successive minimizations:
\begin{itemize}
    \item Minimizing the Ritz functional \cref{ritz_r} to obtain \(r^{k}\).
    \item Minimizing the Ritz functional \cref{ritz_u} to compute \(u^{k+1}\).
\end{itemize}
When minimizer \(u\) and \(r\) are approximated using neural network parameterizations, this naturally leads to a \emph{Uzawa Double Deep Ritz method}.

Let \( u_{\theta} \in U_{\theta} \subset U \) and \( r_{\eta} \in V_{\eta} \subset V \) denote neural 
network representations of the $u$ and $r$, where \(U_{\theta}\) and \(V_{\eta}\) 
are the corresponding neural network sets and \(\theta\) and \(\eta\) are the trainable 
parameters. These parametric spaces approximate the exact solution spaces while enabling efficient 
optimization through gradient-based training.
\begin{algorithm}[htbp]
\caption{Uzawa Double Deep Ritz Algorithm}
\label{alg:uzawa_DRR}
\begin{algorithmic}[1]
\REQUIRE initial guess $u^{0}_{\theta}\in U_{\theta}$; step size $\tau>0$; maximum iterations $N$
\FOR{$k = 0, 1, 2, \dots,N-1$}
    \STATE 
    \(
        r_{\eta}^{k} 
        \gets \arg\min_{r_\eta \in V_\eta}
        \left\{
            \frac{1}{2}\| r_{\eta} \|^{2}_V
            - \ell(r_{\eta})
            + b\!\left( u_{\theta}^{k}, r_{\eta} \right)
        \right\}
    \)
    \STATE
    \(
        u_{\theta}^{k+1}
        \gets \arg\min_{u_\theta \in U_\theta}
        \left\{
            \frac{1}{2}\| u_{\theta} \|^{2}_U
            - \tau b\!\left( u_{\theta}, r_{\eta}^{k} \right)
            - \big( u_{\theta}^{k}, u_{\theta} \big)_U
        \right\}
    \)
\ENDFOR
\RETURN $u_\theta^N, r_\eta^{N-1}$
\end{algorithmic}
\end{algorithm}
This iterative scheme performs two deep Ritz minimizations per step. We refer to this approach as 
the \emph{Uzawa Double Deep Ritz Method}.
\begin{theorem}\label{thm:uzawa_wan}
    The Uzawa Double Deep Ritz method is an iterative realization of the Weak Adversarial Network formulation. Any convergent sequence $\{(u_\theta^k, r_\eta^k)\}_{k \ge 0}$ generated by \cref{alg:uzawa_DRR} converges to a saddle point $(u_\theta^*, r_\eta^*)$ of the WAN min-max objective \cref{min-max_wan}.  
\end{theorem}
\begin{proof}
 Consider the update steps from Uzawa Double Deep Ritz \cref{alg:uzawa_DRR};
\begin{enumerate}
    \item Given \(u_\theta^k \in U _\theta\), \begin{equation}\label{eq:UDDR1}
      r_\eta^k= r_\eta^k(u_\theta^k)
=
\arg\min_{r_\eta \in V_\eta}
\left(
\frac12 \|r_\eta\|_V^2
-
\langle \ell - Bu_\theta^k , r_\eta \rangle _{V*,V} 
\right).  
    \end{equation}
\item With \(r_\eta^k\) fixed,
\begin{equation}\label{eq:UDDR2}
u_{\theta}^{k+1}
=
\arg\min_{u_\theta \in U_\theta}
\left\{
\frac12 \|u_\theta\|_U^2
-
\tau\, b\!\left(u_\theta, r_\eta^k\right)
-
\left(u_\theta^k, u_\theta\right)_U
\right\}.
\end{equation}
\end{enumerate}

The minimizer of the Ritz functional \cref{eq:UDDR2} is equivalently characterized as the minimizer of \(\frac{1}{2}\|u_\theta-u_\theta^k\|^2+\tau~\langle \ell - Bu_\theta,r_\eta^k \rangle _{V*,V} \), i.e. 
\begin{equation}\label{eq:UDDR3}
 u_{\theta}^{k+1}
=
\arg\min_{u_\theta \in U_\theta} \frac{1}{2}\|u_\theta-u_\theta^k\|^2_U+\tau~\langle \ell - Bu_\theta,r_\eta^k \rangle _{V*,V}
\end{equation}
Let \((u_\theta^k,r_\eta^k) \rightarrow(u_\theta^*,r_\eta^*)\) as \(k \rightarrow \infty\).
For this limiting case let us revisit each update rule, for a fixed $u_\theta$ the residual update \cref{eq:UDDR1} becomes,
\begin{equation}\label{eq:UDDR4}
  r_\eta^*= r_\eta^*(u_\theta)
=
\arg\min_{r_\eta \in V_\eta}
\left(
\frac12 \|r_\eta\|_V^2
-
\langle \ell - Bu_\theta, r_\eta \rangle _{V*,V} 
\right). 
\end{equation}
According to the equivalence of supremum and minimization formulation established in Lemma 1 of \cite{alsobhi2025neural}, we can say \cref{eq:UDDR4} is equivalent to,
\begin{equation}\label{eq:UDDR5}
    r_\eta^*(u_\theta)=\arg \sup_{v_\eta \in V_\eta} \frac{\langle \ell - Bu_\theta, v_\eta \rangle _{V*,V} }{\|v_\eta\|_V}
\end{equation}
subject to normalization,
\begin{equation}\label{eq:UDDR6}
   \|r_\eta^*(u_\theta)\|^2_V=\langle \ell - Bu_\theta, r_\eta^*(u_\theta) \rangle _{V*,V}. 
\end{equation}
Now consider the solution update \cref{eq:UDDR3} as \(k \rightarrow \infty\), it becomes,
\begin{align*}
u_\theta^*&=\arg\min_{u_{\theta} \in U_\theta}\langle \ell - Bu_\theta, r_\eta^*(u_\theta) \rangle _{V*,V} \\
&= \arg\min_{u_{\theta} \in U_\theta}\|r_\eta^*(u_\theta)\|_V^2 \quad(\text{from}~\cref{eq:UDDR6})\\
& =\arg\min_{u_{\theta}\in U_\theta}\|r_\eta^*(u_\theta)\|_V\\
& = \arg\min_{u_{\theta} \in U_\theta} \frac{\langle \ell - Bu_\theta, r_\eta^*(u_\theta)\rangle _{V*,V}}{\|r_\eta^*(u_\theta)\|_V} \quad(\text{from} ~\cref{eq:UDDR6})\\
& = \arg\min_{u_{\theta} \in U_\theta} \sup_{v_{\eta} \in V_\eta} \frac{\langle \ell - Bu_\theta, v_\eta \rangle _{V*,V} }{\|v_\eta\|_V}\quad(\text{from}~\cref{eq:UDDR5})\\
& \Leftrightarrow \text{WAN formulation}~\cref{min-max_wan}.
\end{align*}
Thus, the limit of the iterative sequence recovers the saddle point solution of the WAN formulation.
\end{proof}
\section{Theoretical Results}\label{sec:Theoretical_Results}
The Uzawa framework provides a natural and effective approach for solving the minimal residual formulation, however, in practical implementations, the exact realization of each Uzawa update is rarely
feasible. In particular, both the residual and solution updates require either the inversion of operators or the exact solution of associated variational problems, which becomes computationally prohibitive in high-dimensional or complex settings.

Motivated by these considerations, the proposed Uzawa Double Deep Ritz method is intrinsically formulated within an inexact framework, where each update is computed only approximately. This observation necessitates the development of a rigorous convergence theory tailored to the inexact Uzawa setting underlying the method.
 
In this section, we first introduce an abstract inexact Uzawa scheme and establish its convergence under controlled approximation errors. We then refine the analysis to a practically relevant regime in which each update is obtained by a number of gradient descent steps applied to the corresponding Ritz functionals. 
\subsection{Inexact Uzawa Method}
In practice, the exact Uzawa updates are typically replaced by approximate ones. 
The operator-based update \cref{uzawa_operator_update}, if computed exactly, can be expensive, particularly for large-scale or high-dimensional problems. Consequently, we replace both update steps in the Uzawa iteration with computationally feasible approximations. 
 
At iteration \(k\), the exact residual equation
\(
     r^{k} = R_V^{-1}(\ell - B u^{k})
\)
would yield the exact dual update \(r^{k}\). Since an exact solution of this problem is typically too costly, we replace it with an approximation \(r^{k}_{\delta}\) satisfying a prescribed relative accuracy (error bound $\delta$). This residual update is then used in the subsequent solution update step.

In the exact Uzawa method, the solution $u$ is updated according to
\[
    u^{k+1} = u^{k} + \tau R_{U}^{-1} B^{*} r^{k}.
\]
However, because only the approximate residual \(r^{k}_{\delta}\) is available, one first forms the corresponding
exact update with this approximate input,
\[
    u^{k+1}_{\delta} 
    = u^{k}_{\delta} + \tau R_{U}^{-1} B^{*} r^{k}_{\delta}.
\]
As with this update, evaluating this expression exactly may be computationally intensive, and we therefore
introduce an additional approximation \(u^{k+1}_{\delta,\varepsilon}\), whose deviation from 
\(u^{k+1}_{\delta}\) is controlled by a relative error bound ($\varepsilon$).

 Thus, each iteration is subsequently replaced by an inexact computation. The resulting structure forms the basis of the inexact Uzawa framework analyzed in the following section.
\begin{algorithm}[htbp]
\caption{Inexact Uzawa Iteration}
\label{alg:inexact-uzawa}
\begin{algorithmic}[1]
\REQUIRE {initial guess $u^{0}_{\delta,\varepsilon} \in U$; step size $\tau>0$}
\FOR{$k = 0,1,2,\dots$ until convergence}
    \STATE 
    {Compute $r^{k}_{\delta}$ as an approximation of  
    \( r^{k} = R_V^{-1}(\ell - B u^{k}_{\delta,\varepsilon}).\)}
    \STATE
    {Compute $u^{k+1}_{\delta,\varepsilon}$ as an approximation of  
    \(u^{k+1}_{\delta} = u^{k}_{\delta,\varepsilon}
        + \tau\, R_{U}^{-1} B^{*} r^{k}_{\delta}.\)}
\ENDFOR
\RETURN $u_{\delta,\varepsilon}, r_{\delta}$
\end{algorithmic}
\end{algorithm}
We now analyze the convergence behavior of the resulting inexact Uzawa scheme in \cref{alg:inexact-uzawa}, in which both the \(r\)-update and the \(u\)-update are performed with controlled inexactness. This provides a theoretical foundation for the approximate updates in place of the exact Uzawa updates. 

\subsection{Convergence analysis}
The convergence of the inexact Uzawa method with an approximate update on the first update only is analyzed in \cite{bacuta2006unified}.
Building on this perspective, we establish a convergence result for an inexact Uzawa scheme in which both update steps are performed approximately.
The resulting theorem shows that the overall iteration remains convergent, provided the approximation errors at each step are suitably controlled.
\begin{theorem}[Convergence of Inexact Uzawa Method]\label{thm:inexact}
\\
Assume \( \|r_{\delta}^{k}-r^{k}\| \leq \delta\|r^{k}\| \)  and $\| u^{k+1}_{\delta,\varepsilon} - u^{k+1}_{\delta} \| \leq \varepsilon \| u^{k}_{\delta,\varepsilon} - u^{k+1}_{\delta} \|$ with $\delta>0,\varepsilon>0$ and
\[\delta + \varepsilon(1+\delta)< \frac{1 - \gamma}{\tau M^2} \]
and \( \gamma =\|I-\tau R_U^{-1}B^* R_V^{-1}B \|=\max \{|1-\tau m^2|,|1-\tau M^2|\}<1\). Then,\begin{equation*}
    u^k_{\delta,\varepsilon} \rightarrow u^* \in U, r_{\delta}^k\rightarrow r^*\in V.
\end{equation*}  
\end{theorem}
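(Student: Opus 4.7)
\medskip
\noindent
\textbf{Proof plan.} The natural approach is an error propagation argument in $e^k := u^{k}_{\delta,\varepsilon} - u^{*}$ via a three-term decomposition. I would introduce the hypothetical exact one-step iterate from the current inexact state,
\[
\widetilde{u}^{k+1} := u^{k}_{\delta,\varepsilon} + \tau R_U^{-1} B^{\dagger} R_V^{-1}\bigl(\ell - B u^{k}_{\delta,\varepsilon}\bigr),
\]
and split
$u^{k+1}_{\delta,\varepsilon} - u^{*} = \bigl(u^{k+1}_{\delta,\varepsilon} - u^{k+1}_{\delta}\bigr) + \bigl(u^{k+1}_{\delta} - \widetilde{u}^{k+1}\bigr) + \bigl(\widetilde{u}^{k+1} - u^{*}\bigr)$. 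Using $Bu^{*}=\ell$, the third piece collapses to $T e^{k}$ with $T := I - \tau R_U^{-1} B^{\dagger} R_V^{-1} B$. The goal is to show that each of the first two perturbation terms is bounded by a constant multiple of $\|e^k\|_U$ small enough that the overall contraction factor stays strictly below one.

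\medskip
\noindent
The key preliminary is to identify $\|T\| = \gamma$. I would argue that $A := R_U^{-1} B^{\dagger} R_V^{-1} B$ is self-adjoint and positive on $U$: the computation $(Au,w)_U = (R_V^{-1}Bu, Bw)_{V,V^{*}} = (R_V^{-1}Bu, R_V^{-1}Bw)_V$ is manifestly symmetric in $u, w$, and $(Au,u)_U = \|Bu\|_{V^{*}}^{2}$ combined with the stability bound \eqref{bound_B} localizes the spectrum of $A$ inside $[m^{2}, M^{2}]$. Hence $\|T\|=\sup_{\lambda \in [m^{2},M^{2}]}|1-\tau\lambda| = \max\{|1-\tau m^{2}|,|1-\tau M^{2}|\} = \gamma < 1$. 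For the two perturbation terms I would use $r^{k} = R_V^{-1}(\ell - Bu^{k}_{\delta,\varepsilon}) = -R_V^{-1}B e^{k}$, which yields $\|r^{k}\|_V \leq M\|e^{k}\|_U$. Then, invoking the Riesz isometry $\|R_U^{-1}g\|_U = \|g\|_{U^{*}}$ together with $\|B^{\dagger}\| \leq M$,
\[
\|u^{k+1}_{\delta} - \widetilde{u}^{k+1}\|_U = \tau\|R_U^{-1}B^{\dagger}(r^{k}_{\delta} - r^{k})\|_U \leq \tau M \delta \|r^{k}\|_V \leq \tau M^{2} \delta \|e^{k}\|_U,
\]
and, since $u^{k+1}_{\delta} - u^{k}_{\delta,\varepsilon} = \tau R_U^{-1}B^{\dagger} r^{k}_{\delta}$, the hypothesis on the $u$-update yields
\[
\|u^{k+1}_{\delta,\varepsilon} - u^{k+1}_{\delta}\|_U \leq \varepsilon \tau M \|r^{k}_{\delta}\|_V \leq \varepsilon \tau M (1+\delta)\|r^{k}\|_V \leq \varepsilon \tau M^{2}(1+\delta)\|e^{k}\|_U.
\]

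\medskip
\noindent
Assembling the three pieces by the triangle inequality produces the one-step contraction
\[
\|e^{k+1}\|_U \leq \bigl[\gamma + \tau M^{2}\bigl(\delta + \varepsilon(1+\delta)\bigr)\bigr]\|e^{k}\|_U,
\]
and the smallness hypothesis on $\delta + \varepsilon(1+\delta)$ is exactly what forces the bracket below one, giving geometric decay $u^{k}_{\delta,\varepsilon}\to u^{*}$. Convergence of the residual iterate is then an immediate consequence: the exact saddle point satisfies $r^{*} = R_V^{-1}(\ell - Bu^{*}) = 0$, so $\|r^{k}_{\delta} - r^{*}\|_V = \|r^{k}_{\delta}\|_V \leq (1+\delta)M\|e^{k}\|_U \to 0$. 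The main obstacle in this plan is the spectral identification $\|T\| = \gamma$: this is the one place where both Riesz isomorphisms and the two-sided stability bound \eqref{bound_B} have to be orchestrated carefully to certify $A$ as self-adjoint with spectrum in $[m^{2}, M^{2}]$. Once that is secured, the remainder is a routine telescoping of three perturbation terms via the triangle inequality.
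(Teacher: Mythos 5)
Your proposal is correct and takes essentially the same route as the paper: you split the error into the contraction piece $T e^k$ plus two perturbation terms (residual inexactness and update inexactness), bound each perturbation by $\|e^k\|_U$ via the Riesz isometries and $\|B^{\dagger}\|\le M$, and assemble the one-step contraction $\|e^{k+1}\|\le[\gamma+\tau M^2(\delta+\varepsilon(1+\delta))]\|e^k\|$. Your version is slightly more careful in two respects that the paper leaves implicit — the spectral identification $\|T\|=\gamma$ via the self-adjointness of $R_U^{-1}B^{\dagger}R_V^{-1}B$, and the observation that $r^*=0$ (needed to turn $\|r^k_\delta-r^k\|\le\delta\|r^k\|$ into a bound in terms of $\|e^k\|$) — but these are clarifications, not a different argument.
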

\begin{proof}
Let $e^{k}_{r} = r^{k}_{\delta} - r^{*}$ and $e^{k}_{u} = u^{k}_{\delta,\varepsilon} - u^{*}$. We begin with the residual update rule. From the definition,  
\(
R_V r^{k} = \ell - Bu^{k}_{\delta,\varepsilon}
\)  
and subtracting the corresponding relation for the exact solution gives,  \[R_V (r^{k} - r^{*}) = -B(u^{k}_{\delta,\varepsilon} - u^{*}) \]  
\begin{equation} \label{eq:I}
\implies \| r^{k} - r^{*} \| \leq M \| e^{k}_u \|. 
\end{equation}  
Next, decompose the residual error as  
\(
R_V \big( r^{k} - r^{k}_{\delta} + r^{k}_{\delta} - r^{*} \big) = - B (e^{k}_{u}), 
\)
which yields  
\(
R_V (r^{k}_\delta - r^{*}) = R_V ( r^{k}_{\delta}-r^{k} ) - B(e^{k}_u).
\)  
Consequently,  
\begin{equation}
\| r^{k}_{\delta} - r^{*} \| \leq \| r^{k}_{\delta} - r^{k} \| + M \| e^{k}_u \| \leq M (1 + \delta) \| e^{k}_u \|.  
\label{eq:II}    
\end{equation} 
We now turn to the contraction property for $\| e^{k}_u \|$. The update for $u^{k+1}_{\delta}$ satisfies  
\(
u^{k+1}_{\delta} - u^{*} = u^{k}_{\delta,\varepsilon} - u^{*} + \tau R_{U}^{-1} B^{*} r^{k}_{\delta}.
\)  
By adding and subtracting $u^{k+1}_{\delta,\varepsilon}$, we obtain
\begin{align*}
u^{k+1}_{\delta} - u^{k+1}_{\delta,\varepsilon} + u^{k+1}_{\delta,\varepsilon} - u^{*} 
&= u^{k}_{\delta,\varepsilon} - u^{*} + \tau R_{U}^{-1} B^{*}(r^{k}_{\delta} - r^{*}),
\end{align*}
which leads to
\begin{align*}
u^{k+1}_{\delta,\varepsilon} - u^{*} 
&= (u^{k}_{\delta,\varepsilon} - u^{*}) + \tau R_{U}^{-1} B^{*} (r^{k}_{\delta} - r^{k}) + \tau R_{U}^{-1} B^{*} (r^{k} - r^{*}) + (u^{k+1}_{\delta,\varepsilon} - u^{k+1}_{\delta}) \\
&= (u^{k}_{\delta,\varepsilon} - u^{*}) - \tau R_{U}^{-1} B^{*} R_{V}^{-1} B (u^{k}_{\delta,\varepsilon} - u^{*}) + \tau R_{U}^{-1} B^{*} (r^{k}_{\delta} - r^{k})\\ 
 & \quad~+ (u^{k+1}_{\delta,\varepsilon} - u^{k+1}_{\delta}).
\end{align*}  

Taking norms and applying the bounds leads to  \begin{align*}
\| e^{k+1}_u \| 
&\leq \| I - \tau R_{U}^{-1} B^{*} R_{V}^{-1} B \| \, \| e^{k}_u \| + \tau M \| r^{k}_{\delta} - r^{k} \| + \| u^{k+1}_{\delta,\varepsilon} - u^{k+1}_{\delta} \| \\
&\leq \gamma \| e^{k}_u \| + \tau M \delta \| r^{k} - r^{*} \| + \varepsilon \| u^{k}_{\delta,\varepsilon} - u^{k+1}_{\delta} \| \\
&\leq \gamma \| e^{k}_u \| + \tau M \delta \| r^{k} - r^{*} \| + \varepsilon \tau \| R_{U}^{-1} B^{*} r^{k}_{\delta} \| \\
&\leq \gamma \| e^{k}_u \| + \tau M^{2} \delta \| e^{k}_u \| + \varepsilon \tau M \| r^{k}_{\delta} - r^{*} \| \quad \text{(by \cref{eq:I})} \\
&\leq \gamma \| e^{k}_u \| + \tau M^{2} \delta \| e^{k}_u \| + \varepsilon \tau M^{2} (1+\delta) \| e^{k}_u \| \quad \text{(by \cref{eq:II})}.
\end{align*}    
\[
\implies \| e^{k+1}_u \| \leq \big( \gamma + \tau M^{2} \big( \delta + \varepsilon (1+\delta) \big) \big) \| e^{k}_u \|.
\]  
By assumption,  
\(
\delta + \varepsilon (1+\delta) < \frac{1 - \gamma}{\tau M^{2}},
\) 
which ensures that, \(\| e^{k+1}_u \| \leq \| e^{k}_u \|\). Hence we conclude that $u^{k}_{\delta,\varepsilon} \to u^* \in U$, and by \cref{eq:II}, $r^{k}_{\delta} \to r^* \in V$. 
\end{proof}

In the Uzawa Double Deep Ritz framework described in \cref{alg:uzawa_DRR}, each of the two inner
minimization problems (the $r$ update and the $u$ update) is not exactly solved. Instead, assume both are approximated using a few steps of a gradient descent method  applied to the corresponding Ritz energies.

A key observation is that even when the gradient iterations are very few, or when the corresponding  Ritz energies are far from being fully minimized, the overall Uzawa iteration still converges. The essential requirement is that each approximate inner update for $r$ and $u$ moves in the correct descent direction. Under this directional correctness, the Uzawa outer loop remains convergent.

To formalize this idea, we analyze a simplified setting where each inner Ritz problem is approximated by a single gradient step,
\begin{equation}\label{eq:one_step_grad}
\begin{cases}
   1.~ r^k = r^{k-1} - \alpha \left( r^{k-1} - R_V^{-1}(\ell - B u^k) \right)\\
        2.~ u^{k+1} = u^k - \omega \left( u^{k+1} - u^k - \tau R_U^{-1} B^* r^k \right) 
\end{cases}
\end{equation}
This iteration coincides structurally with the classical Arrow--Hurwicz method \cite{benzi2005numerical}, thereby establishing a connection between the Deep Ritz framework and classical saddle-point algorithms.

Recall that $U$ and $V$ are Hilbert spaces with Riesz maps
$R_U : U \to U^\ast$ and $R_V : V \to V^\ast$.  
Let $B : U \to V^\ast$ be a bounded linear operator satisfying the
continuous inf-sup bounds as in \cref{bound_B}. The convergence theorem below is established by demonstrating that the error satisfies a contraction property governed by an associated iteration matrix.
The structure of the proof is inspired by classical arguments from matrix iterative analysis, as developed in \cite{horn2012matrix} and \cite{varga1999matrix}.
\begin{theorem}\label{thm:uddr}
Consider the inexact Uzawa Double Deep Ritz iteration in which each Deep Ritz
problem is approximated by a single gradient step \cref{eq:one_step_grad} and $(r^\ast,u^\ast)$ be the saddle point solution of \cref{mixed problem_cont}. Then for \(
0 < \alpha < 1,~
0<\omega<1,~
0 < \tau < \frac{2}{M^2}
\), the error, 
\(
e_r^k = r^k - r^\ast \longrightarrow 0,~
e_u^k = u^k - u^\ast\longrightarrow 0~\text{as } k \to \infty. \)
\end{theorem}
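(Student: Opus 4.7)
The plan is to recast the coupled iteration as a single linear recursion on $V \times U$ and show that its amplification operator is a strict contraction. First, I would subtract the saddle-point identities $R_V r^\ast = \ell - Bu^\ast$ and $B^\dagger r^\ast = 0$ from the two update rules in \eqref{eq:one_step_grad}, obtaining
\begin{equation*}
e_r^{k} \;=\; (1-\alpha)\,e_r^{k-1} \;-\; \alpha\,R_V^{-1}B\,e_u^{k},
\qquad
e_u^{k+1} \;=\; e_u^{k} \;+\; \omega\tau\,R_U^{-1}B^{\dagger}e_r^{k}.
\end{equation*}
Substituting the first into the second produces a closed block recursion for the pair $(e_r^{k-1}, e_u^{k})$, driven by the compound operator $A := R_U^{-1}B^{\dagger}R_V^{-1}B$ acting on $U$. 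The identity $(Au,u)_U = \|Bu\|_{V^\ast}^{2}$ combined with \eqref{bound_B} shows that $A$ is self-adjoint and positive with $\sigma(A) \subset [m^{2},M^{2}]$.

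Next, I would apply the continuous functional calculus for $A$, together with the intertwining operators $R_V^{-1}B$ and $R_U^{-1}B^{\dagger}$, to decouple the block iteration spectrally into a parameterised family of $2 \times 2$ matrices
\begin{equation*}
M_\mu \;=\; \begin{pmatrix} 1-\alpha & -\alpha\mu \\[2pt] \omega\tau(1-\alpha)\mu & 1-\alpha\omega\tau\mu^{2} \end{pmatrix},
\qquad \mu \in [m,M].
\end{equation*}
A brief computation yields the clean expressions $\det M_\mu = 1-\alpha$ and $\operatorname{tr} M_\mu = 2-\alpha-\alpha\omega\tau\mu^{2}$. When the characteristic polynomial $p(\lambda)=\lambda^{2} - (\operatorname{tr} M_\mu)\lambda + (1-\alpha)$ has negative discriminant, its roots are complex conjugates of modulus $\sqrt{1-\alpha}<1$, giving immediate contraction. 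In the real-root case I would verify the Schur--Cohn (Jury) conditions $p(1)>0$, $p(-1)>0$, $|\det M_\mu|<1$, which simplify to $\alpha\omega\tau\mu^{2}>0$, $4-2\alpha-\alpha\omega\tau\mu^{2}>0$, and $|1-\alpha|<1$ respectively. The middle inequality follows from $\alpha\omega\tau\mu^{2} \leq \tau M^{2} < 2 < 4-2\alpha$, where we use $\alpha,\omega \in (0,1)$ and $\tau M^{2} < 2$; the others are immediate. Continuity and compactness of $[m,M]$ then give a uniform bound $\sup_{\mu}\rho(M_\mu) \leq \rho < 1$.

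The main obstacle I anticipate is the spectral reduction itself: since $A$ need not possess an orthonormal eigenbasis in the abstract Hilbert setting, the passage to the $2\times 2$ matrices $M_\mu$ must be justified through the projection-valued spectral measure of $A$ together with a joint functional calculus for the intertwining pair $(R_V^{-1}B,R_U^{-1}B^{\dagger})$, rather than by a naive eigenvector expansion. A secondary technicality is that a uniform spectral-radius bound does not automatically yield an operator-norm contraction, so the final convergence statement for $(e_r^{k},e_u^{k})$ requires equipping $V \times U$ with an equivalent weighted inner product built from a $\mu$-continuous family of discrete Lyapunov solutions for $M_\mu$; once this structural step is in place, the rest of the argument is the routine Schur--Jury bookkeeping sketched above, and it delivers $e_r^{k}\to 0$ in $V$ and $e_u^{k}\to 0$ in $U$.
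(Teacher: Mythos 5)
Your reduction to a coupled $2\times2$ block recursion and the application of the Schur--Cohn (Jury) conditions to the characteristic polynomial $\lambda^2 - (\operatorname{tr} M_\mu)\lambda + (1-\alpha)$ is exactly the mechanism the paper uses. Two points merit comment.

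First, a bookkeeping discrepancy. The update rule \eqref{eq:one_step_grad} for $u^{k+1}$ is \emph{implicit}, with $u^{k+1}$ appearing on both sides: $u^{k+1} = u^k - \omega(u^{k+1} - u^k - \tau R_U^{-1}B^\ast r^k)$. Solving for $u^{k+1}$ gives the effective step $\tilde\tau = \omega\tau/(1+\omega)$, not $\omega\tau$ as in your error equation $e_u^{k+1} = e_u^k + \omega\tau R_U^{-1}B^\dagger e_r^k$; you appear to have read the step as explicit. The paper works with $\tilde\tau$ throughout. Fortunately the slip is harmless for the conclusion, since $0<\omega<1$ gives both $\omega\tau < \tau < 2/M^2$ and $\tilde\tau < \tau < 2/M^2$, so your Schur--Cohn verification of $p(1)>0$, $p(-1)>0$, $|1-\alpha|<1$ goes through unchanged with the corrected coefficient. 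Your determinant identity $\det M_\mu = 1-\alpha$ and trace formula likewise match the paper's characteristic polynomial \eqref{charc_eq} once $\tilde\tau$ replaces $\omega\tau$ and your $\mu\in[m,M]$ is identified with $\mu^2$ in the paper's spectral parameter $\mu\in[m^2,M^2]$ for $G$.

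Second, and more substantively, you flag two rigor issues that the paper does \emph{not} address: (i) the reduction to $2\times2$ fiber matrices is justified in the paper only by substituting eigenvectors $w$ of $G$, which in the abstract Hilbert setting does not exhaust the spectrum when $G$ has continuous spectrum; your proposal to pass through the projection-valued spectral measure of the self-adjoint operator $G = R_U^{-1}B^\dagger R_V^{-1}B$ (with $(Gu,u)_U = \|Bu\|_{V^\ast}^2$ confirming $\sigma(G)\subset[m^2,M^2]$) is a genuine strengthening of the argument. (ii) On the passage from $\rho(\mathbb{A})<1$ to $\mathbb{A}^k\to 0$, however, your proposed Lyapunov/weighted-inner-product construction is more machinery than necessary: once $\rho(\mathbb{A})<1$ is established, Gelfand's spectral-radius formula $\rho(\mathbb{A}) = \lim_k \|\mathbb{A}^k\|^{1/k}$ already yields $\|\mathbb{A}^k\|\to 0$ in any Banach algebra of bounded operators, so the equivalent-norm renormalization can be dispensed with. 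In short, your route is the paper's route plus a correct diagnosis of a gap at the spectral reduction step, with one unnecessary detour and one inconsequential algebra slip.
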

\begin{proof}
 Assume that
\(
0 < \alpha < 1,~
0 < \omega < 1,~
0 < \tau < \frac{2}{M^{2}}
\)
and recall the one–step gradient iteration \cref{eq:one_step_grad}.
Consider first the update for $r^k$;
 \(
r^{k}
= r^{k-1} - \alpha \left( r^{k-1} - R_V^{-1}(\ell - B u^{k}) \right),
\)
then,
\begin{equation}
\label{eq:error_r}
e_r^{k} = (1 - \alpha) e_r^{k-1} - \alpha R_V^{-1} B\, e_u^{k}.
\end{equation}
Next, consider the update for $u^{k+1}$;
\(
u^{k+1}
= u^{k} - \omega\left( u^{k+1} - u^{k} - \tau R_U^{-1} B^{*} r^{k} \right).
\)
Rearranging,
\[
(1+\omega) u^{k+1} = (1+\omega) u^{k} + \tau \omega R_U^{-1} B^{*} r^{k},
\]
and hence,
\[
u^{k+1} = u^{k} + \tilde{\tau} R_U^{-1} B^{*} r^{k},
\qquad 
\tilde{\tau} := \frac{\tau \omega}{1+\omega},
\]
then we get,
\(
e_u^{k+1}
= e_u^{k} + \tilde{\tau} R_U^{-1} B^{*} e_r^{k}.
\)

\noindent Using \cref{eq:error_r}, this becomes
\(
e_u^{k+1}
= e_u^{k}
+\tilde{\tau} R_U^{-1} B^{*}\left( (1 - \alpha) e_r^{k-1}
- \alpha R_V^{-1} B e_u^{k} \right),\)
\begin{equation}\label{error_u}
  e_u^{k+1}
= \left( I - \tilde{\tau}\alpha R_U^{-1} B^{*} R_V^{-1} B \right)e_u^{k}
+ \tilde{\tau}(1 - \alpha) R_U^{-1} B^{*} e_r^{k-1}.  
\end{equation}
Combining \cref{eq:error_r} and \cref{error_u}, we write the coupled error system as
\[
\begin{bmatrix}
e_r^{k} \\[4pt]
e_u^{k+1}
\end{bmatrix}
=
\begin{bmatrix}
(1-\alpha)I_V & -\alpha R_V^{-1} B \\[4pt]
\tilde{\tau}(1-\alpha) R_U^{-1} B^{*} & I_U - \tilde{\tau}\alpha R_U^{-1} B^{*} R_V^{-1} B
\end{bmatrix}
\begin{bmatrix}
e_r^{k-1} \\[4pt]
e_u^{k}
\end{bmatrix},
\]
where $I_V$ and $I_U$ denote identity operators on $V$ and $U$.
Let
\begin{equation}\label{eq:matrix1}
E^{k}
=\begin{bmatrix} e_r^{k} \\ e_u^{k+1} \end{bmatrix},
\qquad
\mathbb{A}
=
\begin{bmatrix}
(1-\alpha)I_V & -\alpha R_V^{-1} B \\[4pt]
\tilde{\tau}(1-\alpha)R_U^{-1} B^{*} &
I_U - \tilde{\tau}\alpha R_U^{-1} B^{*} R_V^{-1} B
\end{bmatrix},
\end{equation}
so that
\begin{equation}
\label{eq:contractive_map}
E^{k} = \mathbb{A} E^{k-1}.
\end{equation}
The goal is to show that the spectral radius is less than unity, i.e.,
\(
\rho(\mathbb{A}) < 1,
\)
which guarantees that $\mathbb{A}^{k} \to 0$ and therefore
\(
E^{k} \to 0,
\)
establishing convergence of the iteration.
Let
\(
G := R_U^{-1} B^{*} R_V^{-1} B : U \to U,
\)
which is symmetric positive definite with spectrum contained in $[m^{2}, M^{2}]$.
Consider an eigen pair $(v,w) \in V \times U$,
\[
\mathbb{A}
\begin{pmatrix} v \\ w \end{pmatrix}
= \lambda
\begin{pmatrix} v \\ w \end{pmatrix}.
\]
From the first block row,
\begin{equation}
\label{eq:v_relation}
(1-\alpha-\lambda)v = \alpha R_V^{-1} B w,
\qquad 
v = \frac{\alpha}{1-\alpha-\lambda} R_V^{-1} B w, \qquad 1-\alpha-\lambda \neq 0.
\end{equation}
Substituting \cref{eq:v_relation} into the second block row yields
\[
(1-\alpha-\lambda)(I_U - \alpha\tilde{\tau} G - \lambda I_U) w
= -\tilde{\tau}\alpha(1-\alpha) G w.
\]
Let $w$ be an eigenvector of $G$ with eigenvalue $\mu\in[m^{2},M^{2}]$ where $w\neq 0$. On substituting this, we get the scalar characteristic equation, \(
(1-\alpha-\lambda)(1-\lambda)
+ \lambda\tilde{\tau}\alpha\mu = 0.
\)
Thus, the characteristic polynomial is
\begin{equation}\label{charc_eq}
    p(\lambda)
= \lambda^{2}
- \left( 2 - \alpha - \tilde{\tau}\alpha\mu \right)\lambda
+ (1-\alpha).
\end{equation}
To show that all eigenvalues satisfy $|\lambda|<1$, we use the Schur-Cohn Test in \cite{anderson1973simplified}, which states that  a general quadratic polynomial $p(\lambda)$ with real coefficients  has modulus of  both roots (real or complex conjugates) strictly less than 1 if and only if $p(1)>0,~p(-1)>0$ and $|\text{constant term}|<|\text{leading coefficient}|$.

 For the characteristic polynomial \cref{charc_eq} we have,
\[
p(1)
= 1 - (2-\alpha-\tilde{\tau}\alpha\mu) + 1 - \alpha
= \tilde{\tau}\alpha\mu > 0
\]
\[
p(-1)
= 1 + (2-\alpha-\tilde{\tau}\alpha\mu) + 1 - \alpha
= 4 - 2\alpha - \tilde{\tau}\alpha\mu.
\]
Since $\mu\le M^{2}$ and $\tilde{\tau} < \frac{2}{M^{2}}$, we get
\(
-\tilde{\tau}\alpha\mu > -2\alpha,
\)
so
\[
p(-1) > 4 - 2\alpha - 2\alpha = 4(1-\alpha) > 0.\]
Moreover, $|1-\alpha|<1$ since $\alpha \in (0,1)$. Therefore, for our assumptions we get $|\lambda_1|,|\lambda_2|<1 \implies \rho(\mathbb{A}) < 1. $

 Since $\rho(\mathbb{A})<1$, it follows that $\mathbb{A}^{k}\to 0$, and hence by
\cref{eq:contractive_map},
$E^{k} \to 0$,
i.e.,
$e_r^{k} \to 0$ and $e_u^{k} \to 0$.
Therefore,
$r^{k} \to r^{*}$ and 
$u^{k} \to u^{*}$,
which establishes convergence of the iterative scheme.
\end{proof}
\begin{remark}
In particular, the method converges even though the Ritz energies are
not minimized exactly.  The only requirements are that each
gradient step moves in the correct descent direction, and that the Uzawa
stepsize satisfies $0 < \tau < \frac{2}{M^2}$.
\end{remark} 
The previous result shows that even a single gradient step per update is sufficient for convergence. In practice, one may perform multiple gradient steps in each update to improve accuracy. This leads to a generalized iteration in which the residual update is approximated using $n$ gradient steps and the solution update using $m$ gradient steps. This extends the analysis in \cite{badea2025convergence}, where only one of the updates is treated with multiple inner iterations. In contrast, our framework allows both updates to be treated in a multilevel manner.

\begin{theorem}[Convergence with $n$ and $m$ inner gradient steps]\label{thm:nmconvg}
Consider the inexact Uzawa Double Deep Ritz iteration in which the first Deep Ritz problem is approximated by $n$ gradient steps
and the second by $m$ gradient steps and $(r^\ast,u^\ast)$ be the saddle point solution of \cref{mixed problem_cont}. Then for \(
0 < \alpha < 1,~
0<\omega<1,~
0 < \tau < \frac{2}{M^2},
\) the error,
\(
e_r^k = r^k - r^\ast \longrightarrow 0,~
e_u^k = u^k - u^\ast\longrightarrow 0
~\text{as } k \to \infty. \)

\end{theorem}
\begin{proof}
Following the multi-step gradient updates for $n$ and $m$ inner iterations, the error terms yield the recurrence relations
\begin{equation}\label{mult_grad1}
e_r^k = (1-\alpha)^n e_r^{k-1} - \left(1-(1-\alpha)^n\right) R_V^{-1}B e_u^k,
\end{equation}
and
\begin{equation}\label{mult_grad2}
e_u^{k+1} = e_u^k + \left(1-(1-\omega^*)^m\right) \tau R_U^{-1}B^\ast e_r^k,
\end{equation}
where $\omega^*=\frac{\omega}{1+\omega}$. Substituting \cref{mult_grad1} into \cref{mult_grad2} yields the coupled error system $E^k = \mathbb{A}E^{k-1}$ with the iteration matrix
\begin{equation}\label{eq:matrix2}
\mathbb{A}=
\begin{bmatrix}
(1-\alpha)^n I_V
&
-\left(1-(1-\alpha)^n\right)R_V^{-1}B
\\[6pt]
\tau(1-\alpha)^n(1-(1-\omega^*)^m)R_U^{-1}B^\ast
&
I_U-\tau(1-(1-\omega^*)^m)(1-(1-\alpha)^n)G
\end{bmatrix}.
\end{equation}
By executing a block-row elimination analogous to the proof of \cref{thm:uddr}, the eigenvalues $\lambda$ of $\mathbb{A}$ associated with an eigenvector $w$ of $G$ (with eigenvalue $\mu \in [m^2, M^2]$) satisfy the scalar characteristic equation
\begin{equation}\label{charc_eq2}
p(\lambda) = \lambda^2 - \lambda \Big( 1+(1-\alpha)^n - \tau\mu(1-(1-\omega^*)^m)(1-(1-\alpha)^n) \Big) + (1-\alpha)^n = 0.
\end{equation}
To establish $\rho(\mathbb{A}) < 1$, we apply the Schur-Cohn test. Direct evaluation yields
\begin{align*}
p(1) &= \tau\mu(1-(1-\omega^*)^m)(1-(1-\alpha)^n) > 0, \\
p(-1) &= 2\left( 2(1-\alpha)^n + (1-\omega^*)^m(1-(1-\alpha)^n) \right) > 0.
\end{align*}
Given that $\alpha \in (0,1)$, the constant term satisfies $|(1-\alpha)^n| < 1$. Thus, all roots satisfy $|\lambda| < 1$, ensuring $\rho(\mathbb{A}) < 1$, which concludes the proof of convergence.
\end{proof}
\begin{remark}
The additional inner gradient steps modify the spectral structure of the iteration matrix in a way that may reduce the spectral radius relative to the single-step scheme. This suggests potential acceleration per outer (Uzawa) iteration, depending on the choice of n and m.
\end{remark}
\section{Neural Network Approximation and Training Algorithm}\label{sec:Training-alg}
In this section, we present the neural network implementation of the Uzawa Double Deep Ritz method and describe the associated training strategies used to approximate the associated minimization problems.

In the Uzawa Double Deep Ritz framework, each outer Uzawa iteration consists of two Ritz minimization problems: one corresponding to the residual update and the other to the solution update. To make the parameterization explicit, we represent the residual approximation using a single hidden layer neural network with $n$ neurons as,
\[
    r_\eta(x) = r_{(c,b)}(x) = \sum _{i=1}^{n}c_i\Phi(x+b_i),
\]
where the vector $b=[b_1, \dots, b_n]^T$ acts as a shift inside the nonlinear activation function $\Phi$, and the vector $c=[c_1, \dots, c_n]^T$ linearly weights the hidden units. Similarly, the solution approximation using a single hidden layer neural network with m neurons is given by
\[
    u_\theta(x) = u_{(d,\beta)}(x)=\sum_{i=1}^{m}d_i\Psi(x+\beta_i),
\]
where the vector $\beta=[\beta_1, \dots, \beta_m]^T$ acts as a shift inside the nonlinear activation function $\Psi$, and the vector $d=[d_1, \dots, d_m]^T$ linearly weights the hidden units. The complete parameter sets are thus \(\eta = (c,b)\) and \(\theta = (d,\beta)\).

The corresponding Deep Ritz energy functionals are given by
 \[\mathcal{J}_{r_\eta(c,b)}=\frac{1}{2}\| r_{\eta(c,b)} \|^{2}_V
            - \ell(r_{\eta(c,b)})
            + b\!\left( u_{\theta}^{k}, r_{{\eta}(c,b)} \right)\] and \[\mathcal{J}_{u_\theta(d,\beta)}=\frac{1}{2}\| u_{\theta(d,\beta)} \|^{2}_U
            - \tau b\!\left( u_{\theta(d,\beta)}, r_{\eta}^{k} \right)
            - \big( u_{\theta}^{k}, u_{\theta(d,\beta)} \big)_U\]
At each Uzawa iteration \(k\), these two minimization problems are solved approximately. In this work, we consider two distinct training strategies for performing these inexact updates.
\subsection{Block Gradient Training}
In the first approach, we employ a block-gradient strategy to approximate each minimizer. This method is analogous to the gradient-block approach introduced in~\cite{he2018relu}, where the outer and inner parameters are updated in an alternating fashion. More precisely, for each minimization problem, we first update the outer parameters by solving exactly while keeping the inner parameters fixed. Subsequently, the inner parameters are updated using gradient descent based on the updated outer parameter. This strategy can be interpreted as performing an exact minimization with respect to the linear coefficients while retaining an inexact (gradient descent) update for the nonlinear parameters, thereby aligning naturally with the inexact Uzawa framework.

The resulting training procedure consists of an outer Uzawa loop coupled with two inner loops corresponding to the residual and solution updates. The full structure is summarized in \cref{alg:block-training}.
\begin{algorithm}[htbp]
\caption{Uzawa Double Deep Ritz Training Algorithm (Block-Gradient)}
\label{alg:block-training}
\begin{algorithmic}[1]
\REQUIRE initial parameters $\eta^{0}=(c^{0},b^{0})$, $\theta^{0}=(d^{0},\beta^{0})$; learning rates $\alpha,\omega>0$; inner counts $N_{r},N_{u}$; step size $\tau>0$; outer iterations $N$
\FOR{$k=0,1,\dots,N-1$}
    \STATE $(c,b)\gets \eta^{k}$
    \begin{tikzpicture}[overlay, remember picture]
     \node[anchor=east] at (8,-0.9) 
    {\(\left.\begin{array}{c} \\ \\ \\ \end{array}\right\} \text{Deep Ritz 1}\)};\end{tikzpicture}
    \FOR{$i=1,\dots,N_{r}$}
        \STATE $c\gets \operatorname*{\arg\min}_{c}\,
               \mathcal{J}_{r_{\eta}}(c,b;\theta^{k})$
        \STATE $b\gets b-\omega\,\nabla_{b}\mathcal{J}_{r_{\eta}}(c,b;\theta^{k})$
    \ENDFOR
    \STATE $\eta^{k+1}\gets (c,b)$
    \STATE $(d,\beta)\gets \theta^{k}$
     \begin{tikzpicture}[overlay, remember picture] \node[anchor=east] at (8,-1.1) 
    {\(\left.\begin{array}{c} \\ \\ \\ \end{array}\right\} \text{Deep Ritz 2}\)}; 
    \end{tikzpicture}
    \FOR{$j=1,\dots,N_{u}$}
    \STATE $d\gets \operatorname*{\arg\min}_{d}\,
    \mathcal{J}_{u_{\theta}}(d,\beta;\eta^{k+1},\theta^{k})$
    \STATE $\beta\gets \beta-\alpha\,\nabla_{\beta}\mathcal{J}_{u_{\theta}}(d,\beta;\eta^{k+1},\theta^{k})$
    \ENDFOR
    \STATE $\theta^{k+1}\gets (d,\beta)$
\ENDFOR
\RETURN $\theta^{N},\eta^N$
\end{algorithmic}
\end{algorithm}
\subsection{Full Gradient Training}
As an alternative to the block-gradient strategy, we consider a full gradient descent approach in which both the outer and inner parameters are updated simultaneously using gradient-based optimization. In this setting, no parameter block is solved exactly. Instead, all parameters are updated using gradient descent applied directly to the corresponding Deep Ritz functionals. The resulting training procedure modifies \cref{alg:block-training} by replacing the exact solves in lines 4 and 10 with standard gradient steps: \(c \gets c-\omega  \nabla_{c} \mathcal{J}_{r_\eta}(c,b; \theta^k)\) and \(d\gets d-\alpha\nabla_{d} \mathcal{J}_{u_\theta}(d,\beta;\eta^{k+1},\theta^{k})\) respectively. All other initialization steps and loop structures remain identical to \cref{alg:block-training}.

The two training strategies described above represent different approaches to approximating the inexact Uzawa updates. The block-gradient method leverages partial exact minimization and is expected to produce more stable updates even with a small number of inner iterations. In contrast, the full gradient approach relies entirely on iterative optimization and may require a larger number of inner iterations to achieve comparable accuracy.

In the next section, we investigate these differences through numerical experiments. In particular, we demonstrate that the block-gradient strategy exhibits robustness with respect to the number of inner iterations, whereas the performance of the full gradient method is significantly influenced by this parameter. 
\begin{remark}
    We expect the convergence result established in \cref{thm:uddr} and \cref{thm:nmconvg} for the continuous formulation to carry over to this neural network-based framework, and the numerical experiments in the following section provide evidence supporting this behavior.
\end{remark}
\section{Numerical Experiments}\label{sec:num_exp}
In this section, we evaluate the performance and convergence behavior of Uzawa Double Deep Ritz method applied to a 1D transport problem and a 2D Poisson problem. The proposed framework relies on an alternating iteration that successively minimizes the Deep Ritz energy functionals corresponding to the residual and solution updates. To optimize these energy functionals, we evaluate two distinct minimization strategies introduced in \Cref{sec:Training-alg}: the block- gradient training (detailed in \cref{alg:block-training}) and its full gradient variant (the gradient step modification of \cref{alg:block-training}).
\subsection{Example 1 (1D Transport Problem)}
Consider the one-dimensional \\boundary value problem
\begin{equation*}
\left\{
\begin{array}{r@{}l}
u'(x) &{}= f(x), \quad x \in (0,1), \\[2pt]
u(0) &{}= 0,
\end{array}
\right.
\end{equation*}
with variational formulation: Find $u \in L^2(0,1)$ such that:
\begin{equation*}
-\int_0^1 u(x) v'(x) \, dx
=
\int_0^1 f(x) v(x) \, dx\quad \forall\, v \in H^1_{0)}(0,1) = \{ v \in H^1(0,1) : v(1)=0 \}.
\end{equation*}
We equip the spaces with the following inner products:
 For the trial space $U=L^2(0,1)$,
\(
(u,w)_U = \int_0^1 u(x) w(x) \, dx\), and
for the test space $V=H^1_{0)}(0,1)$,
\(
(v,w)_V = \int_0^1 v'(x)w'(x) \, dx .
\)

 To construct the Uzawa Double Deep Ritz formulation, we approximate  \(r\) and  \(u\) using one hidden layer neural networks for which exact integration and differentiation are employed. In particular, we take  
\[
r_\eta(x)=\sum_{i=1}^{n} c_i\, \mathrm{ReLU}(b_i-x), 
\qquad
u_\theta(x)=\sum_{i=1}^{m} d_i\, H(x-\beta_i),
\]
with trainable parameters 
\(\eta=(c_i,b_i)_{i=1}^{n}\), \(\theta=(d_i,\beta_i)_{i=1}^{m}\) and \(H(x)\) represents Heaviside step function.
This architectural choice mirrors the regularity structure of the underlying variational problem: the ReLU basis yields piecewise linear approximations of \(r\), whereas the Heaviside basis produces piecewise constant approximations of \(u\), in accordance with the double Ritz characterization of the Uzawa update. This choice is also computationally advantageous, as both integration and differentiation can be carried out exactly for these basis functions.
\subsubsection{Block-Gradient Training}
We use the Block-Gradient Training mentioned in \cref{alg:block-training}  with particular parameters: Uzawa outer iterations: \(25\), inner iterations: $N_r=N_u=1$, Uzawa step size \(\tau=0.5\), learning rates \(\alpha=0.04\) and \(\omega=0.01\), \(20\) neurons ($n=m=20$), Source term \(f(x)=1\).

The numerical results exhibit clear convergence consistent with the theoretical predictions. The evolution of the residual approximation \(r_\eta^{k}\) across outer iterations $k$ is shown in  Figure~\ref{fig:residue}, which shows its convergence towards zero.  The shapes confirm the decay of the residual and illustrate the contraction properties of the Uzawa update.
\begin{figure}[htbp]
    \centering
    \begin{subfigure}[b]{0.41\textwidth}
    \centering
    \includegraphics[width=\textwidth]{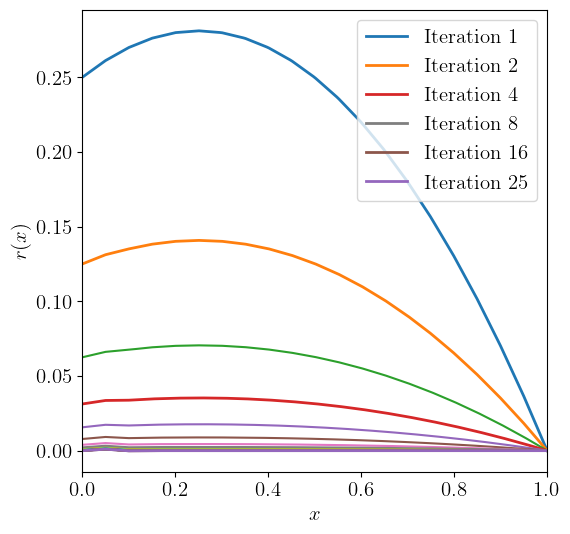}
    \subcaption{}
    \label{fig:residue}
    \end{subfigure}
    \begin{subfigure}[b]{0.4\textwidth}
    \centering
    \includegraphics[width=\textwidth]{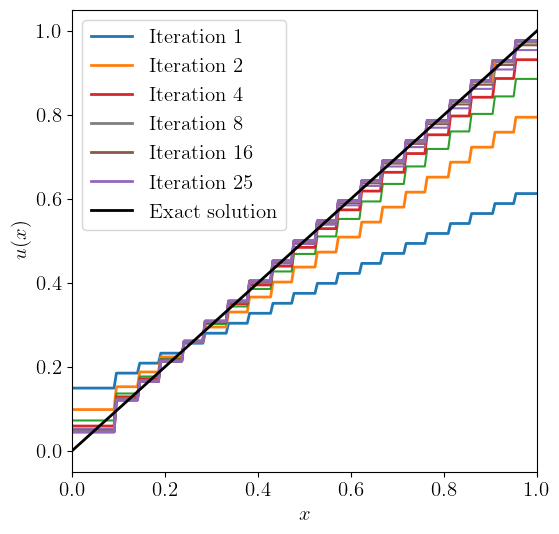}
    \subcaption{}
    \label{fig:u_approx}
    \end{subfigure}
    \caption{(a) Residual $r_\eta(x)$ plotted at Uzawa outer iteration. (b) Convergence towards the exact solution of $u_\theta^{k}$ over successive Uzawa iterations.}
\end{figure}
The successive approximations \(u_\theta^{k}\) also move steadily toward the exact solution as shown in Figure \ref{fig:u_approx}. 

 To assess the performance of the proposed Uzawa Double Deep Ritz method, we monitor two quantities across the outer Uzawa iteration, which are natural quantities in the minimal residual framework. \begin{itemize}
    \item \textit{Residual norm:} measures how well the PDE is satisfied.
    \item \textit{Solution error:} measures how close we are to the exact solution.
\end{itemize}
As predicted by the minimal residual framework, the decay of the residual norm is directly reflected in the reduction of the solution error as in Figure \ref{fig:res,sltn}. Although the two quantities do not overlap, they exhibit proportional behavior, indicating that the residual provides a reliable measure of the approximation error.

Figure~\ref{subfig:bg_u_er} and \ref{subfig:bg_r_er} illustrate the effect of varying the inner iterations $(N_r, N_u)$ on the convergence behavior. As seen in Figure~\ref{subfig:bg_u_er}, the solution error $\|u - u^{k}_\theta\|_U$ follows nearly identical trajectories for all configurations, indicating that the final approximation is largely insensitive to the number of inner iterations. In contrast, Figure~\ref{subfig:bg_r_er} shows noticeable variation in the residual norm $\|r^{k}_\eta\|_V$, reflecting differences in the accuracy of the inner Ritz steps. This indicates that the Uzawa iteration does not require highly accurate inner minimization; rather, it suffices that the updates provide suitable descent directions. This observation is consistent with the inexact Uzawa \cref{thm:uddr}, where convergence depends on descent properties rather than exact solves. 
\begin{figure}[htbp]
    \centering
    \begin{subfigure}[b]{0.32\textwidth}
    \centering
    \includegraphics[width=\textwidth]{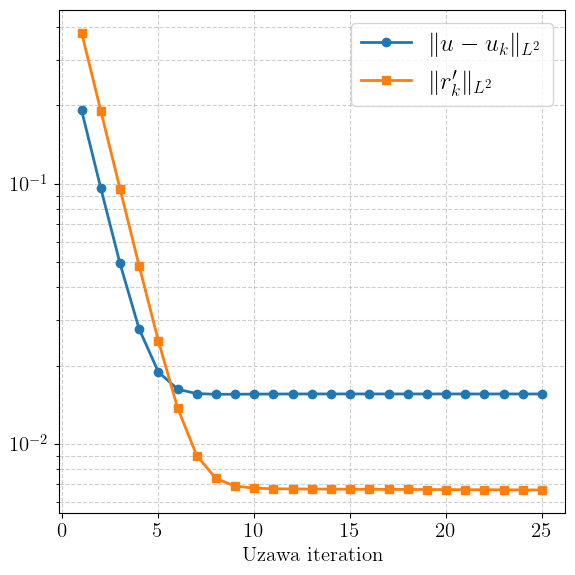}
    \subcaption{}
    \label{fig:res,sltn}
    \end{subfigure}
    \begin{subfigure}[b]{0.32\textwidth}
        \includegraphics[width=\linewidth]{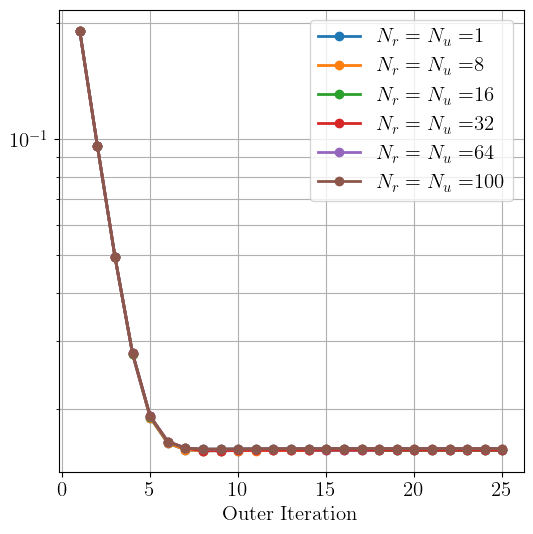}
         \subcaption{}
         \label{subfig:bg_u_er}
    \end{subfigure}
    \begin{subfigure}[b]{0.32\textwidth}
        \includegraphics[width=\linewidth]{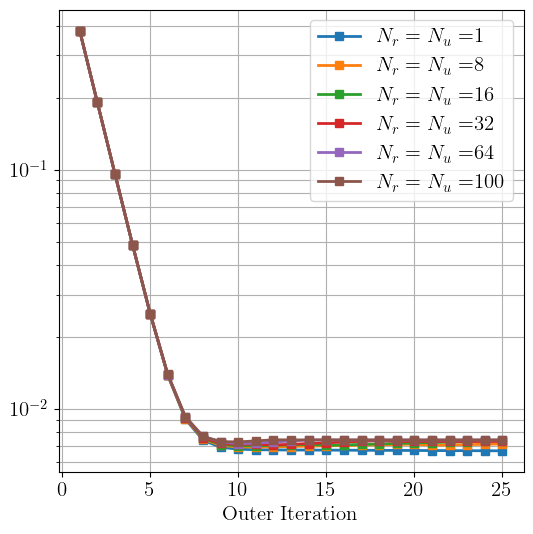}
        \subcaption{}
        \label{subfig:bg_r_er}
    \end{subfigure}
    \caption{(a) Evolution of the residual norm and solution error over Uzawa iterations. (b) Effect of inner iteration $(N_r, N_u)$ on solution error $\|u - u^{k}_\theta\|_U$, showing nearly identical convergence across configurations. (c) Effect of inner iteration $(N_r, N_u)$ on residual norm $\|r^{k}_\eta\|_V$, showing sensitivity to inner solve accuracy.}
    \label{fig:inner_iter}
\end{figure}
 \begin{figure}[htbp]
\centering

\begin{subfigure}{0.32\textwidth}
   \includegraphics[width=\textwidth]{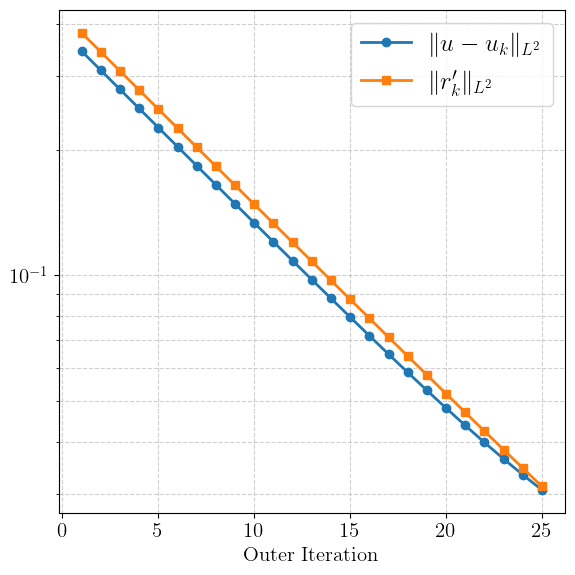}
   \caption{$\tau=0.1$}
\end{subfigure}
\hfill
\begin{subfigure}{0.32\textwidth}
   \includegraphics[width=\textwidth]{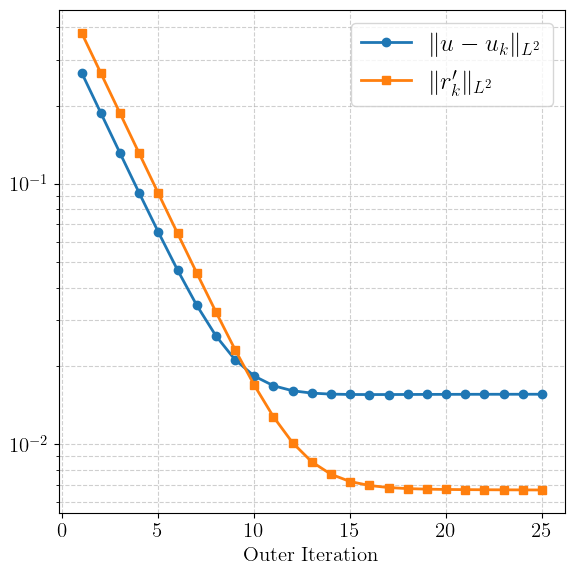}
   \caption{$\tau=0.3$}
\end{subfigure}
\hfill
\begin{subfigure}{0.32\textwidth}
   \includegraphics[width=\textwidth]{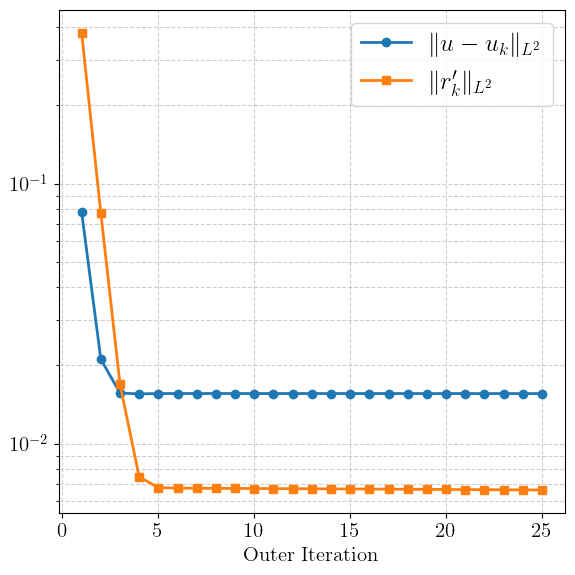}
   \caption{$\tau=0.8$}
\end{subfigure}

\vspace{0.5cm}

\begin{subfigure}{0.32\textwidth}
   \includegraphics[width=\textwidth]{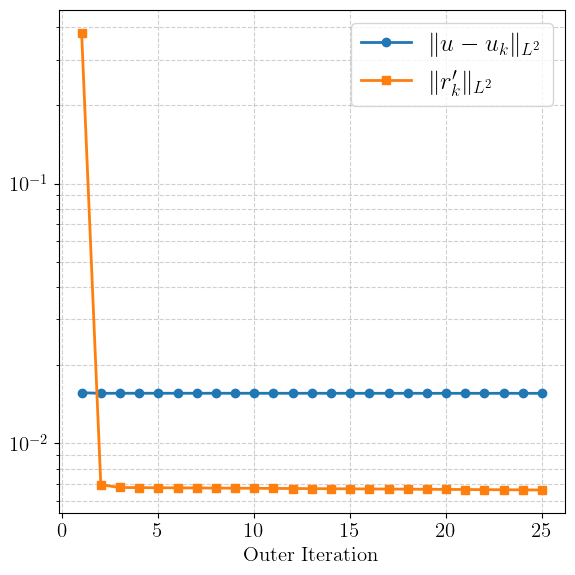}
   \caption{$\tau=1$}
\end{subfigure}
\hfill
\begin{subfigure}{0.32\textwidth}
   \includegraphics[width=\textwidth]{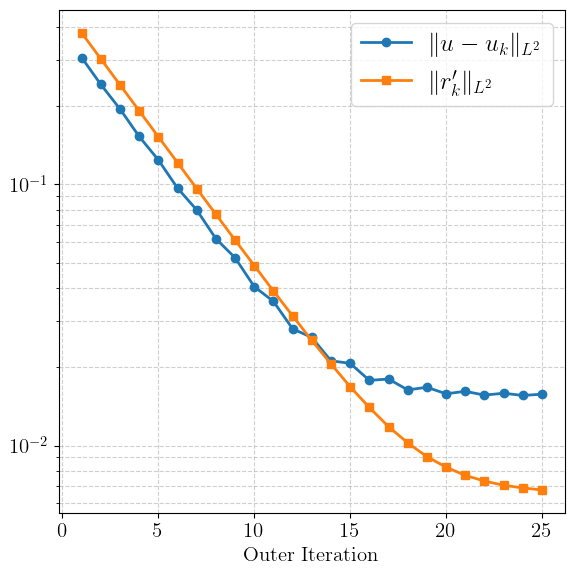}
   \caption{$\tau=1.8$}
\end{subfigure}
\hfill
\begin{subfigure}{0.32\textwidth}
   \includegraphics[width=\textwidth]{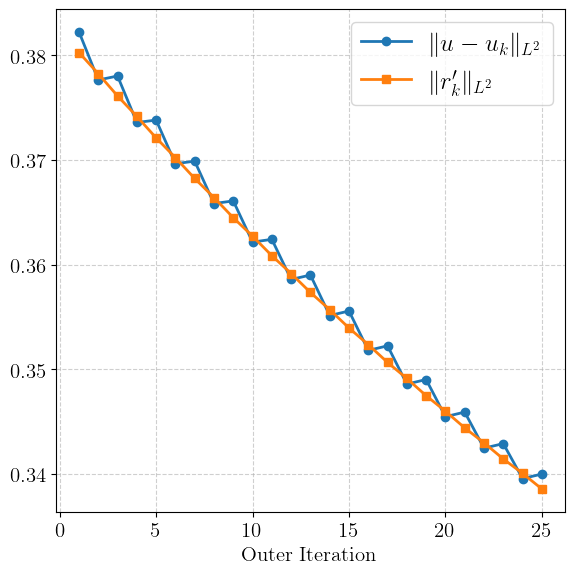}
   \caption{$\tau=2$}
\end{subfigure}

\caption{Influence of the Uzawa step size $\tau$ on convergence.}
\label{fig:converg_tau}
\end{figure}

The numerical results consistently show that, although the inner Deep Ritz routines are not solved to full convergence, they still produce descent directions with sufficient accuracy to preserve the stability of the outer Uzawa iteration (in line with \cref{thm:uddr}). This inexactness does not degrade the overall performance. On the contrary, the iterates associated with the $u$-variable exhibit a clear contractive behavior, consistent with the theoretical results. The observed decay of the corresponding energy over successive iterations further supports the robustness and effectiveness of the proposed Uzawa Double Deep Ritz framework.

The results in Figure \ref{fig:converg_tau} confirm the theoretical stability condition  \(0<\tau<2/M^2\) showing the effect of different $\tau$ values on convergence. For small values of \(\tau\), convergence is slow due to under-relaxation, while values near the upper bound yield faster contraction. Larger values lead to instability, consistent with the theoretical prediction.

\subsubsection{Full Gradient Training}
We now investigate the performance of the full gradient training strategy. In contrast to the block-gradient method, both outer and inner parameters are updated using gradient descent. We consider the same setting as in the previous case, but now with 50 Uzawa outer iterations and $N_r=N_u=1024$ inner iterations.

Since the residual and solution evolution plots exhibit qualitatively similar behavior to those of the block-gradient case, we report only the convergence metrics in Figure~\ref{fig:FG_}, which more clearly illustrate the differences between the two training strategies.

 Figure~\ref{subfig:fg_u_er} and \ref{subfig:fg_r_er} show the effect of varying the number of inner iterations $(N_r, N_u)$ in the full gradient training strategy. Here, both the solution error and the residual norm exhibit a strong dependence on the number of inner iterations, in contrast to the block-gradient case. As seen in Figure~\ref{subfig:fg_u_er}, the solution error $\|u - u^{k}_\theta\|_U$ improves significantly as the number of inner iterations increases. Similarly, Figure~\ref{subfig:fg_r_er} shows that the residual norm $\|r^{k}_{\eta}\|_V$ is sensitive to $(N_r, N_u)$, with larger values yielding faster and more accurate convergence.

 This behavior highlights a key distinction from the block-gradient approach: in the full gradient method, accurate minimization of the intermediate Ritz functionals is crucial for achieving good convergence. In particular, a small number of inner iterations results in slower convergence and higher error levels, demonstrating the sensitivity of the full gradient scheme to the accuracy of inner solves.
\begin{figure}[htbp]
    \centering
    \begin{subfigure}[b]{0.32\textwidth}
    \centering
    \includegraphics[width=\linewidth]{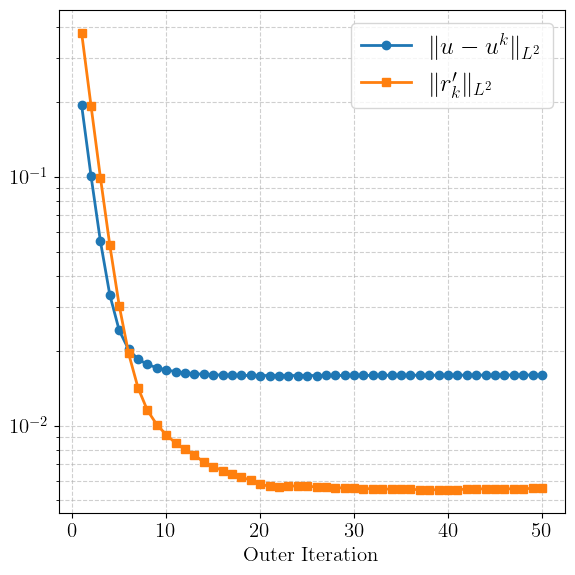}
    \subcaption{}
    \label{fig:FG_}
    \end{subfigure}
    \begin{subfigure}[b]{0.32\textwidth}
        \includegraphics[width=\linewidth]{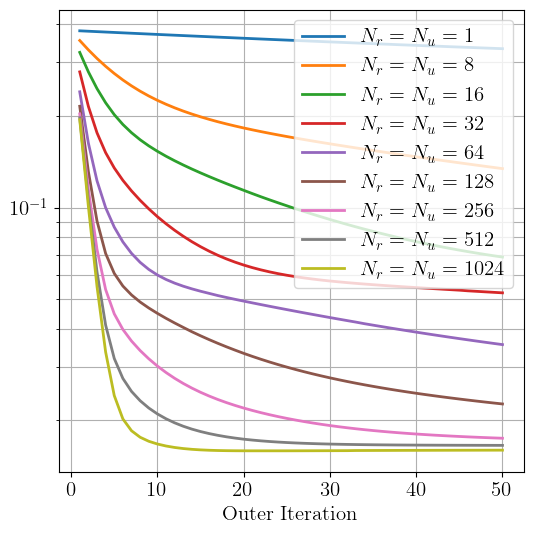}
         \subcaption{}
         \label{subfig:fg_u_er}
    \end{subfigure}
    \begin{subfigure}[b]{0.32\textwidth}
        \includegraphics[width=\linewidth]{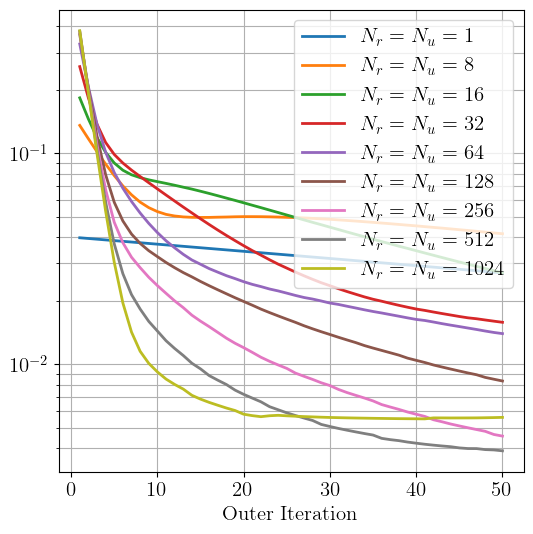}
        \subcaption{}
        \label{subfig:fg_r_er}
    \end{subfigure}
    \caption{(a) Evolution of the residual norm and solution error over Uzawa iterations. 
(b) Effect of inner iteration counts $(N_r, N_u)$ on the evolution of the solution error $\|u - u^{k}_\theta\|_U$. (c) Effect of inner iteration counts $(N_r, N_u)$ on the evolution of the residual norm $\|r^{k}_\eta\|_V$. }
    \label{fig:full_inner}
    
\end{figure}

 These results demonstrate that the block-gradient training strategy is significantly more robust with respect to the accuracy of inner solves. In particular, accurate solution approximation can be achieved even when the residual minimization is performed only approximately. In contrast, the full gradient approach requires sufficiently accurate inner optimization to ensure convergence, making it more sensitive to algorithmic parameters. 

\subsection{Example 2 (2D Poisson Problem)}
Let us consider the following two-dimensional boundary value problem
\begin{equation*}
\left\{
\begin{array}{r@{}ll}
-\Delta u(x,y) &{}= f(x,y), & (x ,y)\in \Omega= (0,1) \times(0,1), \\[2pt]
u(x,y) &{}= 0, & (x,y) \in \partial \Omega,
\end{array}
\right.
\end{equation*}
with variational formulation: Find $u \in H^1_{0}(\Omega) $ such that:
\begin{equation*}
\int_{\Omega} \nabla u \nabla v \, dx
=
\int_{\Omega} fv \, dx\quad \forall\, v \in H^1_{0}(\Omega) = \{ v \in H^1(\Omega) : v=0~ \text{on}~\partial\Omega \}.
\end{equation*}
We equip the trial and test space $U=V=H^1_{0}(\Omega)$, with the following inner product:
\begin{equation*}
(u,w)_U = \int_{\Omega} \nabla u \nabla w \, dx .
\end{equation*}
To construct the Uzawa Double Deep Ritz methodology for the two-dimensional Poisson problem, we approximate both the solution variable $u$ and the residual variable $r$ using single hidden layer fully connected feedforward neural networks. Specifically, we consider the parametric representations:
\[
u_\theta(x,y) = \sum_{i=1}^{m} d_i \tanh(a_{i1}x + a_{i2}y + \beta_i),
\qquad
r_\eta(x,y) = \sum_{i=1}^{n} c_i \tanh(w_{i1}x + w_{i2}y + b_i),
\]
where \(u_\theta, r_\eta : \mathbb{R}^2 \to \mathbb{R}\) are feedforward neural networks with a single hidden layer, \(m\) and \(n\) neurons, respectively, and hyperbolic tangent activation.\\ 
Here \(w_{i1}, w_{i2}, a_{i1}, a_{i2} \in      \mathbb{R}\) denote input-layer weights, \(b_i, \beta_i \in \mathbb{R}\) hidden-layer biases, and \(c_i, d_i \in \mathbb{R}\) output-layer weights. The parameter vectors are defined as
\(
\eta = \{w_{i1}, w_{i2}, b_i, c_i\}_{i=1}^{m},~
\theta = \{a_{i1}, a_{i2}, \beta_i, d_i\}_{i=1}^{n}.
\)
The underlying energy functionals are optimized via a full gradient descent approach. This architectural choice reflects the analytical structure of the weak formulation: the smooth $\tanh$ activations ensure sufficient regularity to evaluate first-order spatial derivatives via automatic differentiation, enabling a stable approximation of the Dirichlet energy and the associated constraint residuals. Domain integrals appearing in the variational formulation are approximated by Monte Carlo quadrature over randomly sampled collocation points, yielding a mesh-free discretization of the underlying Sobolev inner products.

We choose the exact solution $u_{\text{exact}}(x,y) =\sin(\pi x)\sin(\pi y)$, which yields the source term $f(x,y)=2\pi^2\sin(\pi x)\sin(\pi y)$. The remaining hyperparameter configurations are summarized as: Uzawa outer iterations: $N = 80$, with inner optimization iterations $N_u = N_r = 300$, Uzawa step size: $\tau = 0.5$, Optimization learning rates: $\alpha = 0.005$ (for $u$) and $\omega = 0.005$ (for $r$).

We observe in Figure \ref{fig:u_approx_2d} that the  Uzawa approximated solution moves towards the exact solution over successive Uzawa iterations. 
\begin{figure}[htbp]
    \centering
    \includegraphics[width=0.5\linewidth]{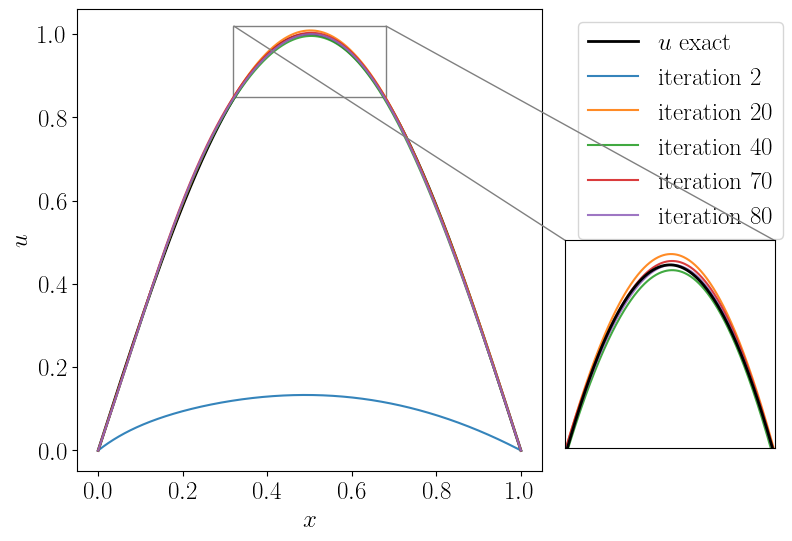}
    \caption{Evolution of the solution approximation $u_\theta^{k}(x)~\text{along}~y=0.5$ over successive Uzawa iterations.}
    \label{fig:u_approx_2d}
\end{figure}
The convergence performance of the Uzawa Double Deep Ritz method is illustrated in Figures \ref{fig:2D_uerror} and \ref{fig:res,sltn_2d}.

Figure \ref{fig:2D_uerror} monitors the square of the error norm, $\|u - u_{\text{exact}}\|^2$, against the cumulative inner optimization iterations (spanning a total of $N \times N_u = 24,000$ iterations). The staircase pattern reflects the interaction of the two loops: within each outer iteration, the inner gradient descent reduces the error until it plateaus, after which the outer Uzawa update shifts the target and enables a further decrease. During the first 5,000 iterations
the error decreases rapidly in this staircase fashion; beyond that point it stabilizes, exhibiting
oscillations typical of gradient-based optimization that can be reduced by hyperparameter
tuning.

\begin{figure}[htbp]
    \centering
    \begin{subfigure}[b]{0.4\textwidth}
    \centering
        \includegraphics[width=\linewidth]{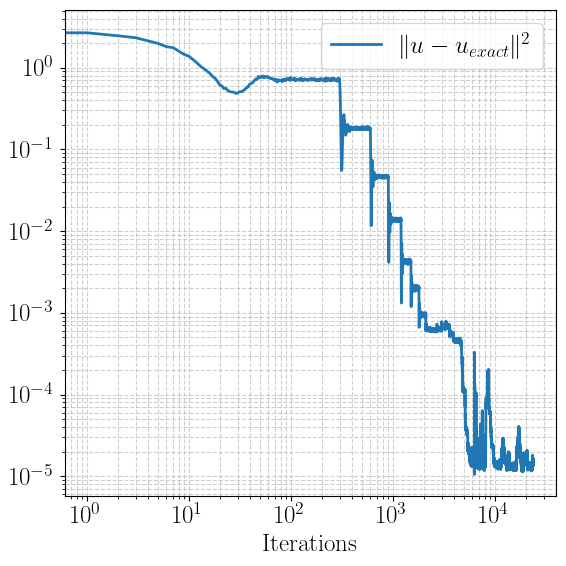}
        \subcaption{}
        \label{fig:2D_uerror}
    \end{subfigure}
    \begin{subfigure}[b]{0.4\textwidth}
        \includegraphics[width=\linewidth]{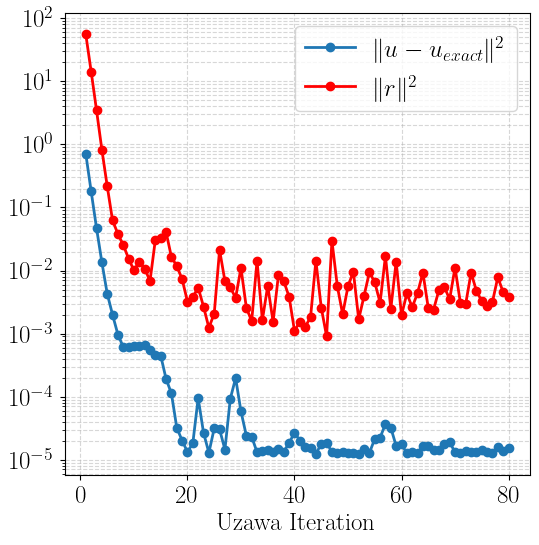}
        \subcaption{}
        \label{fig:res,sltn_2d}
    \end{subfigure}
    \caption{(a) Convergence of the solution error $\|u - u_{\text{exact}}\|^2$ for the 2D Poisson problem, plotted against the cumulative inner optimization iterations across the entire training process ($80 \text{ outer iterations} \times 300 \text{ inner steps}$). (b) Evolution of the residual norm and solution error over Uzawa iterations. }
\end{figure}

To better isolate the macroscopic behavior of the algorithm, Figure \ref{fig:res,sltn_2d} tracks the convergence with respect to the Uzawa outer iterations. We plot both the solution error $\|u -u_{\text{exact}}\|^2$ and the squared norm of the residual $\|r\|^2$. Both quantities decay simultaneously, confirming that residual minimization drives convergence of the solution approximation. Within the first 20 Uzawa iterations, the solution error drops by nearly five orders of magnitude, stabilizing at an error floor near $10^{-5}$. The simultaneous, rapid drop of both the solution error and the dual residual shows how quickly and reliably the proposed method converges. 

\section{Conclusion} 
\label{sec:conl}
We introduced the Uzawa Double Deep Ritz Method, a deep PDE solver that unifies Deep Ritz-type energy minimization with the classical Uzawa framework for saddle-point problems. By employing two neural networks to approximate the trial and test variables, the proposed method provides a mesh-free alternative to classical finite element residual-minimization techniques while retaining the structural stability associated with Uzawa iterations.

From a theoretical perspective, the contributions are threefold. First, we show that the proposed method can be interpreted as an iterative realization of the Weak Adversarial Network formulation, thereby connecting adversarial residual minimization with classical Uzawa iterations. Second, we establish convergence of an inexact Uzawa scheme where both residual and solution updates are computed only approximately. Third, we derive convergence guarantees for practical gradient-based training regimes, including single-step and multi-step updates; the single-step case is closely related to the Arrow–Hurwicz method, while the multi-step case reflects more accurate inner optimization.

The numerical experiments corroborate the theoretical findings and offer a detailed picture of the algorithmic behavior. The results demonstrate consistent decay of both the residual norm and the solution error, as well as stable convergence across different choices of step size and inner iteration parameters. In particular, the experiments highlight the robustness of the block-gradient strategy and illustrate the role of inner optimization accuracy in controlling convergence rates, thereby validating the practical relevance of the inexact analysis.

By bridging classical saddle-point theory with modern neural approximation, the proposed framework opens a pathway toward the systematic design of robust, consistent deep learning methods for partial differential equations.

\appendix
\section{Proof of \cref{thm3.1}}\label{sec:appendix}
\begin{proof}
Upon using the second update rule as in \cref{uzawa_operator_update} 
\begin{align*}
\|u^{k+1}-u^*\|^2_U &= \|u^{k}+\tau~R_U ^{-1}B^* r^{k}-u^{*}-\tau~R_U ^{-1}B^* r^{*}\|^2_U \\ 
& = \|u^{k}-u^{*}+\tau~R_U ^{-1}B^*(r^{k}-r^*)\|^2_U   
\end{align*}
Now, by definition of norm,
\begin{equation}\label{eq5}
    \|u^{k+1}-u^*\|^2_U=\|u^{k}-u^*\|^2_U+\tau~^{2}\|R_U ^{-1}B^*(r^{k}-r^*)\|^2_U+2\tau~(R_U ^{-1}B^*(r^{k}-r^*),u^k-u^*)_U
\end{equation}

\begin{claim}\label{claim_appendix}
Notice that, $(R_U ^{-1}B^*(r^{k}-r^*),u^k-u^*)_U=b(u^k-u^*,r^k-r^*)=-\|r^k-r^*\|^2_V$.
\end{claim}

\noindent Using claim~\ref{claim_appendix} in Eq.\cref{eq5}, we get
\[\|u^{k+1}-u^*\|^2_U=\|u^{k}-u^*\|^2_U+\tau~^{2}\|R_U ^{-1}B^*(r^{k}-r^*)\|^2_U+2\tau~(R_U ^{-1}B^*(r^{k}-r^*),u^k-u^*)_U\]
\[=\|u^{k}-u^*\|^2_U+\tau~^{2}\|R_U ^{-1}B^*(r^{k}-r^*)\|^2_U-2\tau~\|r^k-r^*\|^2_V\]
Using the boundedness property of $R_U^{-1}$ and $B^*$ i.e., 

$\|R_U^{-1}(g)\|_U=\|g\|_{U^\ast}$ and $\|B^* v\|_{U^\ast} \leq M \|v\|_{V} $
\[\|u^{k+1}-u^*\|^2_U \leq \|u^{k}-u^*\|^2_U+ \tau~^{2}M^2\|r^k-r^*\|^2_V-2\tau~\|r^k-r^*\|^2_V\]
\[=\|u^{k}-u^*\|^2_U+( \tau~^{2}M^2-2\tau~)\|r^k-r^*\|^2_V\]
\noindent Since for $\tau~\in (0,\frac{2}{M^2})$ we obtain $( \tau~^{2}M^2-2\tau~)\|r^k-r^*\|^2_V<0$ 
\[ \implies\|u^{k+1}-u^*\|^2_U <\|u^{k}-u^*\|^2_U\]
\[ \implies \lim_{k\rightarrow \infty} \|u^{k}-u^*\|^2_U- \|u^{k+1}-u^*\|^2_U=0 \]
Now,
\[\|r^k-r^*\|^2_V \leq \frac{\|u^{k}-u^*\|^2_U- \|u^{k+1}-u^*\|^2_U}{2\tau-\tau~^{2}M^2~}\]
\[\implies \lim_{k\rightarrow \infty} \|r^k-r^*\|^2_V=0. \]
\end{proof}

\begin{proof}[Proof of Claim~\ref{claim_appendix}]
\[(R_U ^{-1}B^*(r^{k}-r^*),u^k-u^*)_U=B^*(r^{k}-r^*)(u^k-u^*)=b(u^k-u^*,r^k-r^*)\]
Also, we have from the first update rule  for $r^*~and~r^k$:
\[(r^{k},v)_V=\ell(v)-b(u^{k},v)\]
\[(r^{*},v)_V=\ell(v)-b(u^{*},v)\]
On subtracting and substituting $v=r^k-r^*$ 
\[(r^k-r^*,r^k-r^*)_V=-b(u^k-u^*,r^k-r^*)\]
\[\|r^k-r^*\|^2_V=-b(u^k-u^*,r^k-r^*)\]
\[\implies b(u^k-u^*,r^k-r^*)=(R_U ^{-1}B^*(r^{k}-r^*),u^k-u^*)_U=-\|r^k-r^*\|^2_V.\]   
\end{proof}

\section*{Acknowledgments}
EBC is supported by the European Union’s Horizon Europe research and innovation programme under the Marie Sk\l{}odowska-Curie grant agreement No 101119556. The research of IB and KvdZ was supported by the Engineering and Physical Sciences Research Council (EPSRC), UK, under Grant EP/W010011/1.

\bibliographystyle{siamplain}
\bibliography{references}
\end{document}